\newtheorem{theorem}{Théorème}[section]
\newtheorem{corollary}[theorem]{Corollaire}
\newtheorem{definition}[theorem]{Définition}
\newtheorem{proposition}[theorem]{Proposition}
\newenvironment{proof}[1][Preuve]{\textbf{#1.} }{\ \rule{0.5em}{0.5em}}
\begin{document}

\title{De la caract\'{e}risation matricielle des drapeaux complets
d'extensions riemanniennes aux feuilletages riemanniens transversalement
diagonaux sur une vari\'{e}t\'{e} compacte connexe}
\author{\textit{Cyrille Dadi}$^{(1)}$\textit{\ \ et Adolphe Codjia}$^{\left(
2\right) }$ \\
Laboratoire de Math\'{e}matiques Fondamentales, \\
Universit\'{e} F\'{e}lix Houphou\"{e}t-Boigny, ENS\\
\ 08 BP 10 Abidjan (C\^{o}te d'Ivoire)\\
\textit{\ email}$^{(1)}:$\textit{cyriledadi@yahoo.fr , email}$^{(2)}:$%
\textit{ad\_wolf2000@yahoo.fr}\\
\ }
\maketitle

\bigskip

{\Large R\'{e}sum\'{e}}

Ici, nous caract\'{e}risons matriciellement les drapeaux complets
d'extensions riemanniennes 
$\mathcal{D}_{_{\mathcal{F}_{q}\text{ }}}=\left( \mathcal{F}_{q-1},\text{ }\mathcal{F}_{q-2},\text{ }...,\text{ }\mathcal{F}_{1}\right)$
d'un feuilletage riemannien $\mathcal{F}_{q}$ sur une vari\'{e}%
t\'{e} compacte connexe sous l'hypoth\`{e}se d'existense d'une m\'{e}trique
quasi-fibr\'{e}e commune \`{a} tous les feuilletages $\mathcal{F}_{s}$ de ce
drapeau. Cette caract\'{e}risation nous permet de voir que chaque
feuilletage de ce drapeau est un feuilletage riemannien transversalement
diagonal.

\textbf{Mots-cl\'{e}s}: \textit{feuilletage riemannien transversalement
diagonal, extension d'un feuilletage}\textbf{, }drapeau complet d'extension
riemannienne\textit{.}

\bigskip

{\Large Abstract}

In this paper we characterise with the matrix the complete flag of
riemannian\ extension (see d\'{e}finition) $\mathcal{D}_{_{\mathcal{F}_{q}%
\text{ }}}=\left( \mathcal{F}_{q-1},\text{ }\mathcal{F}_{q-2},\text{ }...,%
\text{ }\mathcal{F}_{1}\right) $ on a riemannian compact manifold whose
metric is bundlelike for any foliation $\mathcal{F}_{s}$ of this flag.

This study show us that a foliation of a complete flag of riemannian
extension on a riemannian compact manifold whose metric is bundlelike for
any foliation of this flag is a transversely diagonal riemannian foliation
(see d\'{e}finition).\textit{\ }

\textbf{Keywords}:\textit{\ transversely diagonal riemannian foliation,
extension of a foliation, complete flag of riemannian\ extension.}

\section{Introduction}

Etant donn\'{e} un feuilletage $\mathcal{F}_{q}$ de codimension $q$ sur une
vari\'{e}t\'{e} $M,$ un drapeau d'extension du feuilletage $\mathcal{F}_{q}$
est une suite $\mathcal{D}_{_{\mathcal{F}_{q}\text{ }}}^{k}=\left( \mathcal{F%
}_{q-1},\text{ }\mathcal{F}_{q-2},\text{ }...,\text{ }\mathcal{F}_{k}\right) 
$ de feuilletages sur la vari\'{e}t\'{e} $M$ telle que $\mathcal{F}%
_{q}\subset \mathcal{F}_{q-1}\subset \mathcal{F}_{q-2}\subset $ $...\subset 
\mathcal{F}_{k}$ et chaque feuilletage $\mathcal{F}_{s}$ est de codimension $%
s.$

Pour $k=1$, le drapeau d'extension $\mathcal{D}_{_{\mathcal{F}_{q}\text{ }%
}}^{k}$ sera dit complet et sera not\'{e} $\mathcal{D}_{_{\mathcal{F}_{q}%
\text{ }}}.$

Si chaque feuilletage $\mathcal{F}_{s}$ est riemannien, le drapeau
d'extension $\mathcal{D}_{_{\mathcal{F}_{q}\text{ }}}^{k}$ sera appel\'{e}
drapeau d'extension riemannienne du feuilletage $\mathcal{F}_{q}.$

On montre que si $\mathcal{F}_{q}$ est un feuilletage riemannien de
codimension $q$ admettant un drapeau complet d'extension riemannienne $%
\mathcal{D}_{_{\mathcal{F}_{q}\text{ }}}=\left( \mathcal{F}_{q-1},\text{ }%
\mathcal{F}_{q-2},\text{ }...,\text{ }\mathcal{F}_{1}\right) $ sur une vari%
\'{e}t\'{e} compacte connexe $M$ et s'il existe une m\'{e}trique $\mathcal{F}%
_{s}$-quasifibr\'{e}e commune \`{a} tous les feuilletages $\mathcal{F}_{s}$
\ alors chaque feuilletages $\mathcal{F}_{s}$ est transversalement diagonal
c'est \`{a} dire que chaque feuilletage $\mathcal{F}_{s}$ est d\'{e}fini par
un cocycle feuillet\'{e} $(U_{_{i}},f_{_{i}},T,\gamma _{_{ij}})_{i\in I}$ v%
\'{e}rifiant les conditions suivantes:

\textit{i)} les ouverts $U_{_{i}}$ sont $\mathcal{F}$-distingu\'{e}s,

\textit{ii)} la m\'{e}trique $g_{_{T}}$ est une m\'{e}trique sur la vari\'{e}%
t\'{e} transverse $T$ et les $\gamma _{_{ij}}$ sont des isom\'{e}tries
locales pour cette m\'{e}trique,

\textit{iii)} sur chaque ouvert $f_{_{i}}\left( U_{_{i}}\right) $ il existe
un syst\`{e}me $\left( y_{1,}^{i}y_{2,...,}^{i}y_{q}^{i}\right) $ de coordonn%
\'{e}s $\mathcal{F}$-transverses locales tel que relativement aux syst\`{e}%
mes de coordonn\'{e}s $\mathcal{F}$-transverses locales $\left( \left(
y_{1,}^{i}y_{2,...,}^{i}y_{q}^{i}\right) \right) _{i\in I}$ sur les ouverts $%
f_{_{i}}\left( U_{_{i}}\right) $, la matrice jacobienne $J_{\gamma _{_{ij}}}$
de $\gamma _{_{ij}}$ est diagonale et orthogonale.

On \'{e}tablit aussi pour un feuilletage riemannien minimal $\left( M,%
\mathcal{F}_{q},g_{_{T}}\right) $ de codimension $q$ sur une vari\'{e}t\'{e}
compacte connexe $M$ d'alg\`{e}bre de Lie structurale $\mathcal{G}$\ ce qui
suit:

1) Il existe une sous-alg\`{e}bre de Lie $\mathcal{G}_{_{\mathcal{F}}}$ de $%
\mathcal{G}$ telle qu'\`{a} toute extension $\mathcal{F}^{\prime }$ de $%
\mathcal{F}_{q}$ correspond (de fa\c{c}on unique) une sous-alg\`{e}bre de
Lie $\mathcal{G}^{\prime }$ de $\mathcal{G}$\ v\'{e}rifiant $\mathcal{G}_{_{%
\mathcal{F}}}\subset \mathcal{G}^{\prime }$.

2) Toute extension $\mathcal{F}^{\prime }$ de $\mathcal{F}_{q}$ est un
feuilletage riemannien et il existe une m\'{e}trique quasi-fibr\'{e}e
commune \`{a} $\mathcal{F}_{q}$ et $\mathcal{F}^{\prime }.$

3) $\mathcal{F}_{q}$ admet un drapeau complet d'extension si et seulement si 
$\mathcal{F}_{q}$ est transversalement diagonal.

Dans tout ce qui suit, les vari\'{e}t\'{e}s consid\'{e}r\'{e}es sont suppos%
\'{e}es connexes et la diff\'{e}rentiabilit\'{e} C$^{\infty }$.

\section{Rappels}

Dans ce paragraphe, nous r\'{e}formulons dans le sens qui nous est utile
certaines d\'{e}finitions et certains th\'{e}or\`{e}mes qui se trouvent dans
($\cite{BH}$, $\cite{DAD},$ $\cite{DD},$ $\cite{DIA2},\cite{FED}$ $\cite{GOD}%
,$ $\cite{MOL},\cite{MAL}$ $\cite{MAS},$ $\cite{SA}).$

\begin{theorem}
$\cite{MOL}$ Soient $\mathcal{F}$ un feuilletage riemannien de codimension $%
q $ sur une vari\'{e}t\'{e} compacte connexe $M,$ $\overline{F^{\natural }}$
l'adh\'{e}rence d'une feuille $F^{\natural }$ du feuilletage relev\'{e} $%
\mathcal{F}^{\natural }$ de $\mathcal{F}$ sur le\textit{\ }fibr\'{e} $\pi $:$%
M^{\natural }\rightarrow M$\textit{\ }des\textit{\ }rep\`{e}res\textit{\ }%
orthonorm\'{e}s\textit{\ }$\mathcal{F}$\textit{-}transverses,\newline
$j:\overline{F^{\natural }}\hookrightarrow M^{\natural }$ l'injection
canonique de $\overline{F^{\natural }}$ dans $M^{\natural }$, $\phi =\pi
\circ j$ et $z\in \overline{F^{\natural }}.$

Aors:

i) $\pi \left( F^{\natural }\right) $ est une feuille de $\mathcal{F}$ et $%
\phi \left( \overline{F^{\natural }}\right) =\overline{\pi \left(
F^{\natural }\right) }.$

ii) L'application $\phi :\overline{F^{\natural }}\rightarrow \phi \left( 
\overline{F^{\natural }}\right) $ est une fibration principale.
\end{theorem}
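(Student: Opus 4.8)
The plan is to separate the two assertions and to rely on the structure theory of transversely parallelizable foliations as the main external input. For (i), recall that $\mathcal{F}^{\natural}$ is, by construction, the holonomy lift of $\mathcal{F}$ to the transverse orthonormal frame bundle $\pi:M^{\natural}\to M$; consequently $\pi$ carries each leaf of $\mathcal{F}^{\natural}$ onto a leaf of $\mathcal{F}$, so $\pi(F^{\natural})$ is a leaf and $\phi=\pi\circ j$ restricted to $F^{\natural}$ has image $\pi(F^{\natural})$. Since $M$ is compact the structure group $O(q)$ is compact, hence $\pi$ is a proper, closed map; therefore $\pi(\overline{F^{\natural}})$ is closed, contains $\pi(F^{\natural})$, and by continuity is contained in $\overline{\pi(F^{\natural})}$. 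Combining these gives $\phi(\overline{F^{\natural}})=\pi(\overline{F^{\natural}})=\overline{\pi(F^{\natural})}$, which is (i).

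For (ii), the key input is that $\mathcal{F}^{\natural}$ is transversely parallelizable on the compact manifold $M^{\natural}$, so by Molino's structure theorem for such foliations $\cite{MOL}$ the leaf closures are embedded submanifolds which partition $M^{\natural}$ (they are the fibres of the basic fibration); in particular two leaf closures are either disjoint or equal, and $\overline{F^{\natural}}$ is a closed submanifold. The right action of $O(q)$ on $M^{\natural}$ is free and foliated (it sends leaves of $\mathcal{F}^{\natural}$ to leaves of $\mathcal{F}^{\natural}$), hence it permutes the leaf closures. I introduce the isotropy subgroup $H=\{a\in O(q):\overline{F^{\natural}}\cdot a=\overline{F^{\natural}}\}$, which is a closed, hence compact, subgroup of $O(q)$.

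The central step is to identify the fibres of $\phi$ with the $H$-orbits. Fix $x\in\overline{\pi(F^{\natural})}$ and $z\in\overline{F^{\natural}}$ with $\pi(z)=x$. If $a\in H$ then $z\cdot a\in\overline{F^{\natural}}\cdot a=\overline{F^{\natural}}$ and $\pi(z\cdot a)=x$, so $z\cdot H\subset\phi^{-1}(x)$. Conversely, any $z'\in\phi^{-1}(x)$ lies in the same $O(q)$-fibre as $z$, so $z'=z\cdot a$ for a unique $a\in O(q)$; then $z'=z\cdot a\in\overline{F^{\natural}}\cdot a$, and since $z'\in\overline{F^{\natural}}$ the two leaf closures $\overline{F^{\natural}}\cdot a$ and $\overline{F^{\natural}}$ meet, hence coincide, giving $a\in H$. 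Thus $\phi^{-1}(x)=z\cdot H$ is exactly one $H$-orbit.

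It then remains to conclude. The compact group $H$ acts freely (a restriction of the free $O(q)$-action) and smoothly on the submanifold $\overline{F^{\natural}}$, so the action is proper and $\overline{F^{\natural}}\to\overline{F^{\natural}}/H$ is a principal $H$-bundle. Since $\phi$ is constant on $H$-orbits and, by the previous step, separates distinct orbits, it factors through a diffeomorphism $\overline{F^{\natural}}/H\xrightarrow{\sim}\overline{\pi(F^{\natural})}$, surjectivity coming from (i); hence $\phi:\overline{F^{\natural}}\to\phi(\overline{F^{\natural}})$ is a principal fibration with structure group $H$. The main obstacle is not this bookkeeping but the structural input from $\cite{MOL}$ that the leaf closures of the lifted foliation are embedded submanifolds forming the basic fibration; once that is granted, the freeness and compactness of the $O(q)$-action make the remaining argument routine.
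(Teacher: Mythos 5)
The paper itself contains no proof of this statement: it is recalled in the ``Rappels'' section with the citation $\cite{MOL}$, so your proposal can only be compared with Molino's original argument, whose architecture you in fact reproduce (transverse parallelizability of $\mathcal{F}^{\natural }$, leaf closures as fibres of the basic fibration, the $O(q)$-action permuting leaf closures, the isotropy subgroup $H$, fibres of $\phi $ identified with $H$-orbits). Your part (i) is correct as written, and your set-theoretic identification $\phi ^{-1}\left( x\right) =z\cdot H$ in part (ii) is also correct, including the closedness of $H$.

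The genuine gap is in the last step of (ii), where you pass from ``continuous bijection $\overline{F^{\natural }}/H\rightarrow \phi \left( \overline{F^{\natural }}\right) $'' to ``diffeomorphism'', hence to the smooth principal fibration. That implication does not follow from the properties you have established (closed embedded submanifold, fibres of $\pi $ meeting it in exactly one $H$-orbit, free action of the compact group $H$): in the trivial principal bundle $\mathbb{R}\times S^{1}\rightarrow \mathbb{R}$ the closed embedded curve $N=\left\{ \left( \theta ^{3},e^{i\theta }\right) :\theta \in \mathbb{R}\right\} $ meets every fibre in exactly one point (one orbit of $H=\left\{ e\right\} $), yet the projection $N\rightarrow \mathbb{R}$ is a smooth bijection that fails to be an immersion at $\theta =0$, so it is not a diffeomorphism onto its image and not a smooth principal fibration. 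Compactness does give you a homeomorphism $\overline{F^{\natural }}/H\rightarrow \phi \left( \overline{F^{\natural }}\right) $, but that only yields a topological bundle, whereas the smooth statement is precisely the content needed in Molino's development (it is how one sees that leaf closures of $\mathcal{F}$ are submanifolds of $M$). What rules out the pathology, and what your proof must invoke, is the equivariance of the basic fibration $\pi _{b}:M^{\natural }\rightarrow W$: the $O(q)$-action descends to a smooth action on the basic manifold $W$, the group $H$ is the isotropy subgroup of the point $w=\pi _{b}\left( \overline{F^{\natural }}\right) $, and for a vertical vector $\xi _{z}^{\ast }$ with $\xi \in \mathfrak{o}(q)$ one has $\left( d\pi _{b}\right) _{z}\xi _{z}^{\ast }=\xi _{w}^{\ast }$, which vanishes exactly when $\xi $ lies in the isotropy algebra of $w$. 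This gives $\ker d\phi _{z}=T_{z}\overline{F^{\natural }}\cap \ker d\pi _{z}=T_{z}\left( z\cdot H\right) $, so the induced injective map $\overline{F^{\natural }}/H\rightarrow M$ is an immersion; compactness then makes it an embedding onto $\overline{\pi \left( F^{\natural }\right) }$ and the principal bundle structure follows. Without this tangent-space computation (or an equivalent local-triviality argument), your final step is unjustified, and it is not ``routine bookkeeping'' as you claim: it is a second essential use of the basic fibration, beyond the partition property.
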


\begin{definition}
Une extension d$^{\prime }$un feuilletage $\left( M,\mathcal{F}\right) $ de
codimension $q$ est un feuilletage $\left( M,\mathcal{F}^{\prime }\right) $
de codimension $q^{\prime }$ tel que $0<q^{\prime }<q$ et les feuilles de $%
\left( M,\mathcal{F}^{\prime }\right) $ sont des r\'{e}unions de feuilles de 
$\left( M,\mathcal{F}\right) \ \left( \text{on notera }\mathcal{F}\subset 
\mathcal{F}^{\prime }\right) $.
\end{definition}

On montre que si $\left( M,\mathcal{F}^{\prime }\right) $ est une extension
simple d'un feuilletage simple $\left( M,\mathcal{F}\right) $ et si $\left(
M,\mathcal{F}\right) $ et $\left( M,\mathcal{F}^{\prime }\right) $ sont d%
\'{e}finis respectivement par les submersions $\pi :M\rightarrow T$ et $\pi
^{\prime }:M\rightarrow T^{\prime },$ alors il existe une submersion $\alpha
:T\rightarrow T^{\prime }$ telle que $\pi ^{\prime }=\alpha \circ \pi .$

On dira que la submersion $\alpha $ est une liaison\textbf{\ }entre le
feuilletage $\left( M,\mathcal{F}\right) $ et son feuilletage extension $%
\left( M,\mathcal{F}^{\prime }\right) .$

On montre dans $\cite{DAD}$\ que si le feuilletage $\left( M,\mathcal{F}%
\right) $ et son extension $\left( M,\mathcal{F}^{\prime }\right) $ sont d%
\'{e}finis respectivement par les cocycles $(U_{_{i}},f_{_{i}},T,\gamma
_{_{ij}})_{i\in I}$ et $(U_{_{i}},f_{_{i}}^{\prime },T^{\prime },\gamma
_{_{ij}}^{\prime })_{i\in I}$ alors on a%
\begin{equation*}
f_{i}^{\prime }=\theta _{i}\circ f_{i}\text{ \ et }\gamma _{_{ij}}^{\prime
}\circ \theta _{j}=\theta _{i}\circ \gamma _{_{ij}}
\end{equation*}%
o\`{u} $\theta _{s}$\ est une liaison entre le feuilletage $\left( U_{s},%
\mathcal{F}\right) $ et son feuilletage extension $\left( U_{s},\mathcal{F}%
^{\prime }\right) .$

\begin{proposition}
$\cite{DD}$ Etant donn\'{e} un feuilletage $(M$,$\mathcal{F)}$ de vari\'{e}t%
\'{e} transverse mod\`{e}le $T$, soit $T^{\prime }$ une vari\'{e}t\'{e} de
dimension q'~>~0.\ Si les diff\'{e}omorphismes locaux de
transition \ de $\mathcal{F}$ pr\'{e}servent les fibres d$^{\prime }$une
submersion de $T$ sur $T^{\prime },$ alors le feuilletage $\mathcal{F}$
admet une extension de codimension q' et de vari\'{e}t\'{e} mod\`{e}le
transverse $T^{\prime }.$
\end{proposition}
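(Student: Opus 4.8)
Le plan est de transporter vers $T^{\prime}$, au moyen de la submersion donn\'{e}e, le cocycle qui d\'{e}finit $\mathcal{F}$. Soit $(U_{i},f_{i},T,\gamma_{ij})_{i\in I}$ un cocycle feuillet\'{e} d\'{e}finissant $\mathcal{F}$ et soit $p:T\rightarrow T^{\prime}$ la submersion dont les diff\'{e}omorphismes de transition $\gamma_{ij}$ pr\'{e}servent les fibres. La premi\`{e}re \'{e}tape serait de poser $f_{i}^{\prime}=p\circ f_{i}$; chaque $f_{i}^{\prime}$ est alors une submersion de $U_{i}$ sur un ouvert de $T^{\prime}$, comme compos\'{e}e de deux submersions.

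Il faudrait ensuite fabriquer les diff\'{e}omorphismes de transition sur $T^{\prime}$. Dire que $\gamma_{ij}$ pr\'{e}serve les fibres de $p$ signifie que $\gamma_{ij}$ envoie chaque fibre $p^{-1}(t^{\prime})$ dans une fibre de $p$; puisque $p$ est une submersion, ceci induit une application $\gamma_{ij}^{\prime}$, d\'{e}finie sur un ouvert de $T^{\prime}$, telle que $p\circ\gamma_{ij}=\gamma_{ij}^{\prime}\circ p$. En se pla\c{c}ant dans des cartes o\`{u} $p$ s'\'{e}crit comme une projection, on v\'{e}rifie que $\gamma_{ij}^{\prime}$ est lisse et que c'est un diff\'{e}omorphisme local, car $\gamma_{ij}$ en est un et respecte la fibration. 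La relation de cocycle $\gamma_{ik}=\gamma_{ij}\circ\gamma_{jk}$ donne alors $\gamma_{ik}^{\prime}\circ p=\gamma_{ij}^{\prime}\circ\gamma_{jk}^{\prime}\circ p$, d'o\`{u} $\gamma_{ik}^{\prime}=\gamma_{ij}^{\prime}\circ\gamma_{jk}^{\prime}$ sur l'image de $p$, qui est ouverte. Ainsi $(U_{i},f_{i}^{\prime},T^{\prime},\gamma_{ij}^{\prime})_{i\in I}$ est un cocycle feuillet\'{e}, qui d\'{e}finit un feuilletage $\mathcal{F}^{\prime}$ de codimension $q^{\prime}$ et de vari\'{e}t\'{e} transverse mod\`{e}le $T^{\prime}$.

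Il resterait \`{a} \'{e}tablir que $\mathcal{F}^{\prime}$ est bien une extension de $\mathcal{F}$, c'est-\`{a}-dire que ses feuilles sont des r\'{e}unions de feuilles de $\mathcal{F}$. Localement, une plaque de $\mathcal{F}^{\prime}$ au-dessus de $t^{\prime}\in T^{\prime}$ est $(f_{i}^{\prime})^{-1}(t^{\prime})=f_{i}^{-1}\bigl(p^{-1}(t^{\prime})\bigr)=\bigcup_{t\in p^{-1}(t^{\prime})}f_{i}^{-1}(t)$, donc une r\'{e}union de plaques de $\mathcal{F}$. Par recollement des plaques le long des feuilles, chaque feuille de $\mathcal{F}^{\prime}$ est une r\'{e}union de feuilles de $\mathcal{F}$; comme $0<q^{\prime}<q$, on obtient $\mathcal{F}\subset\mathcal{F}^{\prime}$ au sens de la d\'{e}finition, ce qui conclut.

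Le point le plus d\'{e}licat sera la descente des $\gamma_{ij}$ en des $\gamma_{ij}^{\prime}$ bien d\'{e}finis et lisses sur $T^{\prime}$: il faut utiliser pr\'{e}cis\'{e}ment le caract\`{e}re submersif de $p$ pour que la condition de pr\'{e}servation des fibres fournisse une application quotient r\'{e}guli\`{e}re, et contr\^{o}ler soigneusement les domaines de d\'{e}finition (les intersections $f_{i}(U_{i}\cap U_{j})$ et leurs images par $p$) afin que la famille $(\gamma_{ij}^{\prime})$ v\'{e}rifie la relation de cocycle sur des ouverts compatibles. Le reste de la construction est ensuite formel.
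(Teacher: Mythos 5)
Votre d\'{e}monstration est correcte et suit essentiellement la m\^{e}me d\'{e}marche que celle utilis\'{e}e dans l'article (rappel\'{e}e apr\`{e}s la d\'{e}finition 2.2 et mise en \oe uvre dans la preuve de la proposition 3.3) : on fait descendre le cocycle de $\mathcal{F}$ par la submersion $p$, en posant $f_{i}^{\prime }=p\circ f_{i}$ et en induisant les $\gamma _{ij}^{\prime }$ sur $T^{\prime }$ via la relation $\gamma _{ij}^{\prime }\circ p=p\circ \gamma _{ij}$, ce qui correspond exactement aux \'{e}galit\'{e}s $f_{i}^{\prime }=\theta _{i}\circ f_{i}$ et $\gamma _{ij}^{\prime }\circ \theta _{j}=\theta _{i}\circ \gamma _{ij}$ de l'article, la liaison $\theta $ jouant le r\^{o}le de $p$. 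Les v\'{e}rifications que vous signalez (r\'{e}gularit\'{e} de $\gamma _{ij}^{\prime }$ via des sections locales de $p$, contr\^{o}le des domaines, plaques de $\mathcal{F}^{\prime }$ comme r\'{e}unions de plaques de $\mathcal{F}$) sont bien les points \`{a} traiter et votre traitement en est ad\'{e}quat.
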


\begin{theorem}
$\cite{BH}$ Soit $\mathcal{F}$ un feuilletage riemannien transversalement int%
\'{e}grable\ sur une vari\'{e}t\'{e} riemannienne connexe compl\`{e}te $%
\left( M,g\right) $, $\widetilde{M}$ le rev\`{e}tement universel de $\left(
M,g\right) $, $\widetilde{\mathcal{F}}$ et $\widetilde{\mathcal{F}}^{\bot }$%
\ les feuilletages relev\'{e}s respectifs de $\mathcal{F}$ et $\mathcal{F}%
^{\bot }$ sur $\widetilde{M}.$

Alors:

i) $\widetilde{M}$ est diff\'{e}omorphe \`{a} $\widetilde{L}\times 
\widetilde{H}$ o\`{u} $\widetilde{L}$ et $\widetilde{H}$ sont respectivement
des feuilles de $\widetilde{\mathcal{F}}$ et $\widetilde{\mathcal{F}}^{\bot
}.$

ii) $\widetilde{L}$ et $\widetilde{H}$ sont des rev\^{e}tements universels
respectifs des feuilles respectives $L$ et $H$ des feuilletages respectifs $%
\mathcal{F}$ et $\mathcal{F}^{\bot }.$

iii) Toute feuille de $\widetilde{\mathcal{F}}$ est diff\'{e}omorphe \`{a} $%
\widetilde{L}\times \left\{ p\right\} $ o\`{u} $p\in \widetilde{H}.$

iv) Toute feuille de $\widetilde{\mathcal{F}}^{\bot }$ est diff\'{e}omorphe 
\`{a} $\left\{ p\right\} \times \widetilde{H}$ o\`{u} $p\in \widetilde{L}.$

v) La projection de $\widetilde{M}$ sur $\widetilde{H}$ suivant $\widetilde{L%
}$ est une submersion riemannienne.
\end{theorem}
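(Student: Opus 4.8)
The plan is to pass to the universal cover and there manufacture an explicit product decomposition out of the two complementary orthogonal foliations, exploiting that the metric is bundle-like. First I would lift the whole situation to $\widetilde{M}$: the covering map $\pi:\widetilde{M}\to M$ is a local isometry, so the lifted metric $\widetilde{g}$ is complete (because $(M,g)$ is complete) and remains bundle-like for the lifted foliation $\widetilde{\mathcal{F}}$; transverse integrability is a local condition given by hypothesis and it lifts, so $\widetilde{\mathcal{F}}^{\bot}$ is again an integrable distribution orthogonally complementary to $\widetilde{\mathcal{F}}$. Thus on the complete, simply connected manifold $\widetilde{M}$ I have two orthogonal complementary foliations, and I can apply a de Rham / Blumenthal--Hebda type argument.

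The key geometric input is that $\widetilde{\mathcal{F}}^{\bot}$ is totally geodesic. This is Reinhart's property of bundle-like metrics: a geodesic issued orthogonally to $\widetilde{\mathcal{F}}$ stays orthogonal to $\widetilde{\mathcal{F}}$ at every point; since $\widetilde{\mathcal{F}}^{\bot}$ is integrable, such a geodesic remains inside the leaf of $\widetilde{\mathcal{F}}^{\bot}$ through its initial point, so the leaves of $\widetilde{\mathcal{F}}^{\bot}$ are totally geodesic and, being geodesically closed in a complete manifold, complete. Next I would construct the splitting: fix $z_{0}\in\widetilde{M}$, let $\widetilde{L}$ be the leaf of $\widetilde{\mathcal{F}}$ and $\widetilde{H}$ the leaf of $\widetilde{\mathcal{F}}^{\bot}$ through $z_{0}$, and define $\Phi:\widetilde{L}\times\widetilde{H}\to\widetilde{M}$ by sending $(x,y)$ to the intersection of the $\widetilde{\mathcal{F}}$-leaf through $y$ with the $\widetilde{\mathcal{F}}^{\bot}$-leaf through $x$. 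Showing that this intersection is a single point and that $\Phi$ is a diffeomorphism is the main obstacle: completeness of the totally geodesic foliation $\widetilde{\mathcal{F}}^{\bot}$ supplies a globally defined transport identifying the leaves of $\widetilde{\mathcal{F}}$, while simple connectedness of $\widetilde{M}$ forces this transport to be single valued (no holonomy), so that the transport glues into a global product chart. Granting this, (i) follows, and by construction the leaf of $\widetilde{\mathcal{F}}$ through $(x,y)$ is $\widetilde{L}\times\{y\}$ and the leaf of $\widetilde{\mathcal{F}}^{\bot}$ through $(x,y)$ is $\{x\}\times\widetilde{H}$, giving (iii) and (iv).

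For (ii), I would use that the restriction of the covering $\pi$ to a leaf of $\widetilde{\mathcal{F}}$ (resp. $\widetilde{\mathcal{F}}^{\bot}$) is a covering map onto the corresponding leaf $L$ (resp. $H$) of $\mathcal{F}$ (resp. $\mathcal{F}^{\bot}$) downstairs; since $\widetilde{M}\cong\widetilde{L}\times\widetilde{H}$ is simply connected, each factor is simply connected, hence $\widetilde{L}$ and $\widetilde{H}$ are the universal covers of $L$ and $H$. Finally, for (v), the projection $p:\widetilde{M}\cong\widetilde{L}\times\widetilde{H}\to\widetilde{H}$ has the leaves of $\widetilde{\mathcal{F}}$ as fibres and $\widetilde{\mathcal{F}}^{\bot}$ as horizontal distribution; I would verify it is a riemannian submersion by combining the two structural facts. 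Total geodesy of $\widetilde{\mathcal{F}}^{\bot}$ guarantees that each horizontal leaf $\{x\}\times\widetilde{H}$ maps onto $\widetilde{H}$, and the bundle-like (holonomy invariant transverse metric) property guarantees that $dp$ restricted to $\widetilde{\mathcal{F}}^{\bot}$ is a fibrewise isometry onto $T\widetilde{H}$, which is exactly the definition of a riemannian submersion. The delicate point throughout remains the global bijectivity of $\Phi$, which is where completeness and simple connectedness are indispensable; once it is secured, the remaining assertions are formal consequences of the product structure together with the totally geodesic and bundle-like hypotheses.
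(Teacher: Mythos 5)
The paper never proves this statement: it is Th\'{e}or\`{e}me 2.4 of its section ``Rappels'', quoted from \cite{BH} (Blumenthal--Hebda) without proof, so your proposal can only be measured against the original argument, and your overall strategy is indeed the same as theirs. Several of your reductions are sound: Reinhart's theorem for bundle-like metrics does give that the leaves of $\widetilde{\mathcal{F}}^{\bot }$ are totally geodesic with complete leaves; the restriction of the covering $p:\widetilde{M}\rightarrow M$ to a lifted leaf is a covering of the underlying leaf, so (ii) follows from (i) once one notes that the factors of a simply connected product are simply connected; and (v) follows from holonomy invariance of the transverse metric once the product structure and (iii), (iv) are available.

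The genuine gap is the step you introduce with ``Granting this''. The global definedness and single-valuedness of your map $\Phi $ --- equivalently, that every leaf of $\widetilde{\mathcal{F}}$ meets every leaf of $\widetilde{\mathcal{F}}^{\bot }$ in exactly one point --- is not a lemma inside the proof; it \emph{is} the theorem, and the sentence ``completeness supplies a globally defined transport \ldots\ simple connectedness forces this transport to be single valued'' asserts the conclusion rather than proving it. What must actually be shown is that $\widetilde{\mathcal{F}}^{\bot }$ is a \emph{complete Ehresmann connection} for $\widetilde{\mathcal{F}}$: for every path $\gamma $ tangent to $\widetilde{\mathcal{F}}$ and every geodesic segment $\delta $ tangent to $\widetilde{\mathcal{F}}^{\bot }$ with $\gamma (0)=\delta (0)$, there exists a rectangle $R(s,t)$ with $R(s,0)=\gamma (s)$, $R(0,t)=\delta (t)$, the curves $R(\cdot ,t)$ tangent to $\widetilde{\mathcal{F}}$ and the curves $R(s,\cdot )$ tangent to $\widetilde{\mathcal{F}}^{\bot }$. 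This is established by an open-and-closed argument along $\gamma $, and the closedness is exactly where the three hypotheses interact: because the metric is bundle-like and the leaves of $\widetilde{\mathcal{F}}^{\bot }$ are totally geodesic, each slid curve $R(s,\cdot )$ is again a horizontal geodesic of the same length as $\delta $ (locally both are horizontal lifts of the same geodesic of the local quotient), so its endpoint cannot escape to infinity, and completeness lets the partial rectangle be extended past any limiting parameter. Only after this lifting property is secured does homotopy invariance of the lift, combined with $\pi _{1}(\widetilde{M})=1$, make the transport single valued and yield the diffeomorphism $\Phi $, hence (i), (iii) and (iv). Without the rectangle-lifting argument, your proposal names the right ingredients but leaves the core of the theorem unproved.
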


\begin{theorem}
$\cite{DIA2}$ Soit $\mathcal{D}=\left( \mathcal{F}_{1},\text{ }\mathcal{F}%
_{2},\text{ }...,\text{ }\mathcal{F}_{n-1}\right) $ un drapeau d'une $n$-vari%
\'{e}t\'{e} riemannienne connexe compacte $\left( M,g\right) .$ Si la m\'{e}%
trique $g$ est quasi-fibr\'{e}e pour chacun des feuilletages, alors on a les
propri\'{e}t\'{e}s suivantes:

1) La vari\'{e}t\'{e} $M$ est compl\`{e}tement parall\'{e}lisable et les
feuilletages du drapeau sont \`{a} la fois transversalement parall\'{e}%
lisables et transversalement int\'{e}grables. De fa\c{c}on pr\'{e}cise, il
existe un unique parall\'{e}lisme $\left( Y_{k}\right) _{1\leq k\leq n\text{
\ \ }}$de $M$ dit parall\'{e}lisme du drapeau tel que:

- pour tout $k\geq 1,$ $Y_{k}$ est unitaire, tangent \`{a} $\mathcal{F}_{k}$
et oriente le flot qu'il d\'{e}finit,

- pour tout $1\leq k$\guilsinglleft $s\leq n,$ $\left[ Y_{k},Y_{s}\right]
=c_{_{ks}}Y_{k}$ o\`{u} les fonctions $c_{_{ks}}$ , dites fonctions de
structures du drapeau, sont, pour $k\geq 2,$ basiques pour le feuilletage $%
\mathcal{F}_{k-1.}$

2) Le drapeau se rel\`{e}ve sur le rev\`{e}tement universel $\widetilde{M}$
de $M$\ en un drapeau $\widetilde{\mathcal{D}}=\left( \widetilde{\mathcal{F}}%
_{1},\text{ }\widetilde{\mathcal{F}}_{2},\text{ }...,\text{ }\widetilde{%
\mathcal{F}}_{n-1}\right) $ de feuilletages simple. De plus pour tout $k\in
\left\{ 1,2,...,n-1\right\} $ le rev\`{e}tement universel $\widetilde{M}$
est diff\'{e}omorphe \`{a} $\widetilde{F}_{k}\times \widetilde{F}_{k}^{\bot
} $ o\`{u} $\widetilde{F}_{k}$ et $\widetilde{F}_{k}^{\bot }$ sont
respectivement des feuilles de $\widetilde{\mathcal{F}}_{k}$ et $\widetilde{%
\mathcal{F}}_{k}^{\bot }.$
\end{theorem}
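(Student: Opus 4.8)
The plan is to construct the flag parallelism by hand, to establish transverse integrability together with part~2) by a downward induction resting on the splitting theorem \cite{BH}, and finally to read the bracket relations off an adapted orthonormal frame. The flag provides a nested family of integrable tangent distributions $T\mathcal{F}_{1}\subset T\mathcal{F}_{2}\subset\cdots\subset T\mathcal{F}_{n-1}\subset TM$, and since $g$ is bundle-like for each $\mathcal{F}_{k}$ every member of the flag is a Riemannian foliation. Applying Gram--Schmidt to this nested family, after fixing the orientations referred to in the statement, I obtain a global orthonormal frame $Y_{1},\dots,Y_{n}$ with $\langle Y_{1},\dots,Y_{k}\rangle=T\mathcal{F}_{k}$ for $k\le n-1$ and $Y_{n}$ spanning $T\mathcal{F}_{n-1}^{\bot}$; in particular $M$ is completely parallelisable and each $Y_{k}$ is a unit field tangent to $\mathcal{F}_{k}$. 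The normalisation makes the frame unique, so part~1) reduces to the bracket relations and the basicness of the structure functions.

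The step I expect to be the main obstacle is the transverse integrability of every $\mathcal{F}_{k}$, that is the integrability of $T\mathcal{F}_{k}^{\bot}=\langle Y_{k+1},\dots,Y_{n}\rangle$. This cannot come from bundle-likeness of one foliation alone --- the Hopf flow on $S^{3}$ is a Riemannian flow whose orthogonal plane field is a non-integrable contact distribution --- so it is exactly here that one must use that \emph{the same} metric is bundle-like for the whole flag. I would argue by downward induction on the length of the flag. The top foliation $\mathcal{F}_{n-1}$ has codimension one, hence its normal line field $\langle Y_{n}\rangle$ is automatically integrable and $\mathcal{F}_{n-1}$ is a transversely integrable Riemannian foliation; as $M$ is compact, hence complete, the splitting theorem \cite{BH} yields a de~Rham-type decomposition $\widetilde{M}\cong\widetilde{F}_{n-1}\times\widetilde{F}_{n-1}^{\bot}$ of the universal cover in which the lifted leaves are the slices and the projection onto $\widetilde{F}_{n-1}^{\bot}$ is a Riemannian submersion. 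The foliations $\mathcal{F}_{1}\subset\cdots\subset\mathcal{F}_{n-2}$ lie inside $\mathcal{F}_{n-1}$, so they restrict to a flag on each factor $\widetilde{F}_{n-1}\times\{p\}$; one checks that the induced metric stays bundle-like for each restricted foliation, and the inductive hypothesis gives their transverse integrability inside $\widetilde{F}_{n-1}$. Since the decomposition is a metric product, fields tangent to the two factors commute, so $T\mathcal{F}_{k}^{\bot}$, being the sum of the (integrable) complement inside $\widetilde{F}_{n-1}$ and the line $\widetilde{F}_{n-1}^{\bot}$, is integrable; equivariance pushes this down to $M$. Feeding each transversely integrable Riemannian $\mathcal{F}_{k}$ back into \cite{BH} exhibits every lifted $\widetilde{\mathcal{F}}_{k}$ as the fibres of a Riemannian submersion, hence simple, and the inclusions are preserved --- this is precisely part~2).

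With transverse integrability in hand the bracket relations follow by a short computation in the adapted frame. Writing $[Y_{i},Y_{j}]=\sum_{l}c_{ij}^{\,l}Y_{l}$, integrability of the tangent distributions kills the components with $l>\max(i,j)$, transverse integrability kills those with $l<\min(i,j)$, and the remaining intermediate ones are removed by the Koszul form of the bundle-like condition, $g(\nabla_{V}X,W)+g(\nabla_{W}X,V)=0$ for $X$ tangent and $V,W$ normal to $\mathcal{F}_{k}$, combined with the skew-symmetry of the connection coefficients of an orthonormal frame. What survives is $[Y_{k},Y_{s}]=c_{ks}Y_{k}$ for $k<s\le n$. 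For the basicness I would apply the Jacobi identity to $(Y_{m},Y_{k},Y_{s})$ with $m<k$: using the relations just proved, its $Y_{k}$-component reduces to $Y_{m}\cdot c_{ks}=0$, so $c_{ks}$ is constant along $\langle Y_{1},\dots,Y_{k-1}\rangle$ and hence basic for $\mathcal{F}_{k-1}$. Finally the same relations show that for $a>k$ one has $[Y_{a},Y_{m}]\in T\mathcal{F}_{k}$ whenever $m\le k$, so $Y_{k+1},\dots,Y_{n}$ are foliate fields; they form a transverse parallelism of $\mathcal{F}_{k}$, which completes part~1).
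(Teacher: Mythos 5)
A preliminary remark: the paper itself contains no proof of this statement. It is Theorem 2.5 of the section \emph{Rappels}, quoted from \cite{DIA2}, and only its conclusions are reused later; so there is nothing in the paper to compare your argument with, and I can only judge it on its own terms. On those terms, everything you do \emph{after} transverse integrability is essentially right: with the adapted orthonormal frame in hand, involutivity of the $T\mathcal{F}_{j}$ kills the components of $[Y_{k},Y_{s}]$ above $\max(k,s)$, the Lie-derivative (Koszul) form of bundle-likeness for $\mathcal{F}_{\min(k,s)}$ combined with involutivity of the intermediate foliations kills the rest, and your Jacobi-identity extraction of $Y_{m}c_{ks}=0$ for $m<k$ is correct. (Two smaller elisions: the existence of a \emph{global} unit section of each line bundle $T\mathcal{F}_{k}\cap T\mathcal{F}_{k-1}^{\bot}$ is an orientability assertion, not a consequence of Gram--Schmidt; and the claim that the induced metric on a leaf of $\widetilde{\mathcal{F}}_{n-1}$ stays bundle-like for the restricted foliations is asserted, not proved.)

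The genuine gap is in the step you yourself flagged as the crux, and it is fatal. Blumenthal--Hebda, as recalled in Theorem 2.4 of the paper, gives a diffeomorphism $\widetilde{M}\cong\widetilde{F}_{n-1}\times\widetilde{F}_{n-1}^{\bot}$ in which the projection onto the \emph{second} factor is a Riemannian submersion; it does \emph{not} give a metric product. Consequently the commutation of fields tangent to the two factors (Theorem 2.7 of the paper) applies only to product-type fields lifted from the factors, whereas the sections of $T\mathcal{F}_{k}^{\bot}\cap T\mathcal{F}_{n-1}$ you must bracket are not of this type: the induced metric on the slices $\widetilde{F}_{n-1}\times\{p\}$, hence the orthogonal complement of $\mathcal{F}_{k}$ inside them, varies with $p$. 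This is not a repairable detail, because a counterexample sits exactly there. On the compact Heisenberg nilmanifold $M=\Gamma\backslash H_{3}$ with the left-invariant metric $g=dx^{2}+dy^{2}+(dz-x\,dy)^{2}$, take the complete flag $\mathcal{F}_{1}\subset\mathcal{F}_{2}$ where $\mathcal{F}_{1}$ is the flow of $\partial_{z}$ and $\mathcal{F}_{2}$ is the foliation $\{x=\mathrm{const}\}$: the field $\partial_{z}$ is Killing and the transverse metric $dx^{2}$ of $\mathcal{F}_{2}$ is holonomy-invariant, so $g$ is quasi-fibr\'{e}e for both foliations, $M$ is compact and connected, and your induction succeeds both at the top level and on each leaf $\{x=c\}$; yet the assembled distribution $T\mathcal{F}_{1}^{\bot}=\langle\partial_{x},\,\partial_{y}+x\,\partial_{z}\rangle$ is the standard contact distribution, with $[\partial_{x},\partial_{y}+x\,\partial_{z}]=\partial_{z}$. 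So $\mathcal{F}_{1}$ is not transversely integrable, and with $Y_{1}=\partial_{z}$, $Y_{2}=\partial_{y}+x\,\partial_{z}$, $Y_{3}=\partial_{x}$ the relation $[Y_{2},Y_{3}]=c_{23}Y_{2}$ fails. The conclusion therefore cannot be derived from the hypotheses as recalled here (compactness plus a common bundle-like metric): either the statement in \cite{DIA2} rests on a stronger definition of ``drapeau'' than the one reproduced in this paper, or it requires such a strengthening --- and in either case your argument, as written, breaks at precisely this point, as would any argument using only the stated hypotheses.
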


On notera que :

\textit{i)} Les champs de vecteurs $Y_{k}$ du parall\'{e}lisme $\left(
Y_{k}\right) _{1\leq k\leq n\text{ \ \ }}$de $M$\ sont orthogonaux deux \`{a}
deux.

\textit{ii)} Sur chaque feuille $F_{s}^{\bot },$ le parall\'{e}lisme $\left(
Y_{k}\right) _{1\leq k\leq n\text{ \ \ }}$induit un parall\'{e}lisme que
l'on notera $\left( Y_{k}\right) _{s\leq k\leq n\text{ \ \ }}.$

\textit{iii)} Le parall\'{e}lisme $\left( Y_{k}\right) _{s\leq k\leq n\text{
\ \ }}$ de $F_{s}^{\bot }$ se rel\`{e}ve sur le rev\^{e}tement universel $%
\widetilde{M}$ de $M$ en un parral\'{e}lisme $\left( \widetilde{Y}%
_{k}\right) _{s\leq k\leq n\text{ }}$ sur $\widetilde{F}_{s}^{\bot }$.

\begin{theorem}
$\cite{DD}$ Soit $\left( M,\mathcal{F}\right) $ un $G$-feuilletage de Lie
minimal d'une vari\'{e}t\'{e} compacte et connexe, d'alg\`{e}bre de Lie $%
\mathcal{G}.$

Alors:

1-Il y a une correspondance biunivoque entre les sous-alg\`{e}bres de Lie de%
\textit{\ }$\mathcal{G=}Lie\left( G\right) $ $($ou si l'on pr\'{e}f\`{e}re
entre les sous-groupes de Lie connexes de\textit{\ }$G)$ et les extensions
de $\mathcal{F}.$

2- Une extension de $\mathcal{F}$ est un $\frac{\mathcal{G}}{\mathcal{H}}$%
-feuilletage transversalement riemannien \`{a} fibr\'{e} normal trivial, d%
\'{e}finie par une 1-forme vectorielle \`{a} valeurs dans\textit{\ }$\frac{%
\mathcal{G}}{\mathcal{H}}$.

3- Une extension de $\mathcal{F}$ est transversalement homog\`{e}ne $\left( 
\text{resp}.\text{ de Lie}\right) $ si et seulement si le sous-groupe de Lie
de $G$ correspondant est un sous-groupe ferm\'{e} (resp. un sous-groupe
normal) dans $G.$
\end{theorem}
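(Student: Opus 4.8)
The plan is to exploit Fedida's structure theorem for minimal Lie foliations. Since $(M,\mathcal{F})$ is a minimal $G$-feuilletage de Lie on a compact connected manifold, it is defined by a Maurer--Cartan form, i.e. a $\mathcal{G}$-valued $1$-form $\omega$ on $M$ with $\omega_{x}$ surjective for every $x$ and $d\omega+\tfrac{1}{2}[\omega,\omega]=0$, the leaves being the integral manifolds of $\ker\omega$. Passing to the universal cover $\widetilde{M}$, the form $\omega$ integrates to a developing submersion $D:\widetilde{M}\rightarrow G$ whose fibers are the leaves of the lifted foliation $\widetilde{\mathcal{F}}$, and the holonomy representation $h:\pi_{1}(M)\rightarrow G$ has image $\Gamma$ with $\overline{\Gamma}=G$ precisely because $\mathcal{F}$ is minimal. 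This is the structural input I would rely on throughout.

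For part 1, given a Lie subalgebra $\mathcal{H}\subset\mathcal{G}$ (equivalently its connected subgroup $H$), I would set $\mathcal{D}_{\mathcal{H}}=\omega^{-1}(\mathcal{H})$ and first check integrability. Writing the projection $p:\mathcal{G}\rightarrow\mathcal{G}/\mathcal{H}$ and $\omega_{\mathcal{H}}=p\circ\omega$, the Maurer--Cartan equation gives $d\omega_{\mathcal{H}}=-\tfrac{1}{2}\,p([\omega,\omega])$; for vector fields $X,Y$ tangent to $\mathcal{D}_{\mathcal{H}}$ one has $\omega(X),\omega(Y)\in\mathcal{H}$, so $p([\omega(X),\omega(Y)])=0$ exactly because $\mathcal{H}$ is a subalgebra, whence $d\omega_{\mathcal{H}}(X,Y)=0$ and $\mathcal{D}_{\mathcal{H}}$ is integrable by Frobenius. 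Since $\ker\omega\subset\mathcal{D}_{\mathcal{H}}$, the resulting foliation $\mathcal{F}_{\mathcal{H}}$ has leaves that are unions of leaves of $\mathcal{F}$, i.e. it is an extension. The delicate direction is surjectivity of $\mathcal{H}\mapsto\mathcal{F}_{\mathcal{H}}$: I would lift an arbitrary extension $\mathcal{F}^{\prime}$ to $\widetilde{M}$ and push it through the developing map to a foliation of $G$ whose leaves contain the fibers of $D$; by equivariance this foliation of $G$ is invariant under all left translations by $\Gamma$, and here is where minimality is essential --- density of $\Gamma$ in $G$ forces left $G$-invariance, so the leaf through the identity is a connected subgroup $H$, and $\mathcal{H}=\mathrm{Lie}(H)$ is the subalgebra producing $\mathcal{F}^{\prime}$. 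Injectivity follows because distinct subalgebras give distinct tangent distributions $\omega^{-1}(\mathcal{H})$ (fiberwise, as $\omega_{x}$ is surjective).

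Part 2 is then essentially a reading of the construction: the extension $\mathcal{F}_{\mathcal{H}}$ is cut out by the single $(\mathcal{G}/\mathcal{H})$-valued $1$-form $\omega_{\mathcal{H}}$, which is surjective onto the vector space $\mathcal{G}/\mathcal{H}$ and thus identifies the normal bundle $TM/T\mathcal{F}_{\mathcal{H}}$ with the trivial bundle $M\times(\mathcal{G}/\mathcal{H})$, giving the trivial normal bundle. Choosing a basis of $\mathcal{G}/\mathcal{H}$ yields a global transverse parallelism, so $\mathcal{F}_{\mathcal{H}}$ is transversalement parall\'{e}lisable; declaring that parallelism orthonormal produces a holonomy-invariant transverse metric, making $\mathcal{F}_{\mathcal{H}}$ transversalement riemannien, and $\omega_{\mathcal{H}}$ is exactly the announced vector-valued $1$-form defining it.

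Finally, for part 3, the transverse model of $\mathcal{F}_{\mathcal{H}}$ is the coset space $G/H$ acted on by $G$ through $\Gamma$. The extension is transversely homogeneous exactly when this model is an honest homogeneous manifold, i.e. when $G/H$ is a manifold, which holds if and only if $H$ is closed in $G$; and it is transversely Lie, i.e. the model carries a compatible Lie group structure with $\Gamma$ acting by left translations, if and only if $G/H$ is a quotient Lie group, that is if and only if $H$ is a closed normal subgroup. The main obstacle in the whole argument is the surjectivity step of part 1: recognizing an arbitrary extension as coming from a subgroup, for which the density $\overline{\Gamma}=G$ of the holonomy group and Fedida's fibration $D$ are exactly the tools needed.
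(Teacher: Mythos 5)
You should know at the outset that the paper contains no proof of this statement: it is Th\'eor\`eme 2.6 of the section ``Rappels'', quoted from \cite{DD}, and the only related argument actually written in the paper is the proof of its generalization, Th\'eor\`eme 4.1. So your proposal can only be assessed on its own merits and against that later proof. Your part 1 is the standard and correct route: Fedida's developing fibration $D:\widetilde{M}\rightarrow G$, density of the holonomy group $\Gamma$ in $G$ coming from minimality, projection of a lifted extension to a $\Gamma$-invariant foliation of $G$, and the continuity argument upgrading invariance under the dense subgroup $\Gamma$ to invariance under all of $G$, so that an arbitrary extension corresponds to the coset foliation of a connected subgroup $H$. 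You correctly identified this as the place where minimality is indispensable.

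The genuine gap is in your part 2, at the step ``choosing a basis of $\mathcal{G}/\mathcal{H}$ yields a global transverse parallelism; declaring that parallelism orthonormal produces a holonomy-invariant transverse metric''. The frame fields $Y_{s}$ normalized by $\omega_{\mathcal{H}}(Y_{s})=\bar{e}_{s}$ (constant) are in general not foliated for $\mathcal{F}_{\mathcal{H}}$: for $X$ tangent to $\mathcal{F}_{\mathcal{H}}$ one has $i_{X}\omega_{\mathcal{H}}=0$, but the Maurer--Cartan equation gives $L_{X}\omega_{\mathcal{H}}=i_{X}d\omega_{\mathcal{H}}=-p([\omega(X),\omega(\cdot)])$, hence $\omega_{\mathcal{H}}([X,Y_{s}])=p([\omega(X),\omega(Y_{s})])$, and this vanishes for all such $X$ and all $s$ only when $[\mathcal{H},\mathcal{G}]\subset \mathcal{H}$, i.e. when $\mathcal{H}$ is an ideal. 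Equivalently, $\omega_{\mathcal{H}}$ is basic exactly when $\mathcal{H}$ is an ideal, which by part 3 is the ``de Lie'' case; the foliated transverse fields of $\mathcal{F}_{\mathcal{H}}$ correspond only to classes in $N_{\mathcal{G}}(\mathcal{H})/\mathcal{H}$, so for non-normal $H$ there is no such parallelism at all, and your argument proves too much. The transverse Riemannian property is in fact the hard content of part 2: by the same density argument as in part 1, a transverse metric for $\mathcal{F}_{\mathcal{H}}$ amounts to a scalar product on $\mathcal{G}/\mathcal{H}$ for which $\mathrm{ad}(\mathcal{H})$ acts skew-symmetrically, and no formal manipulation of $\omega$ can produce this: for $\mathcal{G}=\mathfrak{sl}(2,\mathbb{R})$ and $\mathcal{H}$ a Cartan subalgebra, $\mathrm{ad}(\mathcal{H})$ has real nonzero eigenvalues on $\mathcal{G}/\mathcal{H}$ and is skew-symmetrizable for no scalar product, while minimal Lie $PSL(2,\mathbb{R})$-foliations of compact manifolds do exist (irreducible cocompact lattices in $PSL(2,\mathbb{R})\times PSL(2,\mathbb{R})$). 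So any proof of part 2 must invoke a global, Molino-theoretic mechanism (this is what the paper's Th\'eor\`eme 4.1 attempts, via the commuting sheaf whose sections are germs of transverse Killing fields), and your soft argument cannot be repaired as written. A smaller inaccuracy: in part 3 the statement pairs ``de Lie'' with normal subgroups, not with closed normal ones; with Fedida's definition (a Maurer--Cartan form with values in $\mathcal{G}/\mathcal{H}$, which is exactly your $\omega_{\mathcal{H}}$ when $\mathcal{H}$ is an ideal), closedness of $H$ is not required.
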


\begin{theorem}
$\cite{RM}$ Soit $M\times N$ le produit de deux vari\'{e}t\'{e}s $M$ et $N$, 
\begin{equation*}
\begin{array}{cccc}
R_{h_{1}}: & T_{x}M & \rightarrow & T_{\left( x,y\right) }M\times N \\ 
& u & \mapsto & u^{h_{1}}=\left( u,0\right)%
\end{array}%
,\text{ }%
\begin{array}{cccc}
R_{h_{2}}: & T_{x}N & \rightarrow & T_{\left( x,y\right) }M\times N \\ 
& w & \mapsto & w^{h_{2}}=\left( 0,w\right)%
\end{array}%
,
\end{equation*}%
$X_{i}\in \mathcal{X(}M\mathcal{)}$ et $Y_{j}\in \mathcal{X(}N\mathcal{)}$.

Alors $\left[ X_{1}^{h_{1}},X_{2}^{h_{1}}\right] =\left[ X_{1},X_{2}\right]
^{h_{1}}$, $\left[ Y_{1}^{h_{2}},Y_{2}^{h_{2}}\right] =\left[ Y_{1},Y_{2}%
\right] ^{h_{2}}$ et $\left[ X_{1}^{h_{1}},Y_{2}^{h_{2}}\right] =0.$
\end{theorem}

\section{Drapeau complet d'extension de feuilletage riemannien sur une vari%
\'{e}t\'{e} compacte connexe}

\begin{definition}
Soit $\mathcal{F}_{q}$ un feuilletage de codimension $q$ sur une vari\'{e}t%
\'{e} $M.$

Un drapeau d'extension du feuilletage $\mathcal{F}_{q}$ est une suite 
\newline
$\mathcal{D}_{_{\mathcal{F}_{q}\text{ }}}^{k}=\left( \mathcal{F}_{q-1},\text{
}\mathcal{F}_{q-2},\text{ }...,\text{ }\mathcal{F}_{k}\right) $ de
feuilletages sur la vari\'{e}t\'{e} $M$ telle que\newline
$\mathcal{F}_{q}\subset \mathcal{F}_{q-1}\subset \mathcal{F}_{q-2}\subset $ $%
...\subset \mathcal{F}_{k}$ et chaque feuilletage $\mathcal{F}_{s}$ est de
codimension $s.$

Pour $k=1$, le drapeau d'extension $\mathcal{D}_{_{\mathcal{F}_{q}\text{ }%
}}^{k}$ sera dit complet et sera not\'{e} $\mathcal{D}_{_{\mathcal{F}_{q}%
\text{ }}}.$

Si chaque feuilletage $\mathcal{F}_{s}$ est riemannien, le drapeau
d'extension $\mathcal{D}_{_{\mathcal{F}_{q}\text{ }}}^{k}$ sera appel\'{e}
drapeau d'extension riemannienne du feuilletage $\mathcal{F}_{q}.$
\end{definition}

\begin{definition}
Un feuilletage $\mathcal{F}$ est dit transversalement diagonal si et
seulement s'il est d\'{e}fini par un cocycle feuillet\'{e} $%
(U_{_{i}},f_{_{i}},T,\gamma _{_{ij}})_{i\in I}$ tel que les ouverts $%
U_{_{i}} $ soient $\mathcal{F}$-distingu\'{e}s et sur chaque ouvert $%
f_{_{i}}\left( U_{_{i}}\right) $ il existe un syst\`{e}me $\left(
y_{1,}^{i}y_{2,...,}^{i}y_{q}^{i}\right) $ de coordonn\'{e}s $\mathcal{F}$%
-transverses locales tel que relativement aux syst\`{e}mes $\left( \left(
y_{1,}^{i}y_{2,...,}^{i}y_{q}^{i}\right) \right) _{i\in I}$ \ de coordonn%
\'{e}s $\mathcal{F}$-transverses locales sur les ouverts $f_{_{i}}\left(
U_{_{i}}\right) $, la matrice jacobienne $J_{\gamma _{_{ij}}}$ de $\gamma
_{_{ij}}$ soit diagonale.
\end{definition}

\begin{proposition}
Soit $\mathcal{F}_{q}$ un feuilletage transversalement diagonal de
codimension $q$ sur une vari\'{e}t\'{e} $M$.

Alors\ $\mathcal{F}_{q}$ admet un drapeau complet d'extension $\mathcal{D}%
_{_{\mathcal{F}_{q}\text{ }}}=\left( \mathcal{F}_{q-1},\text{ }\mathcal{F}%
_{q-2},\text{ }...,\text{ }\mathcal{F}_{1}\right) .$
\end{proposition}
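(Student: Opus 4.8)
The plan is to read off, directly from the diagonality hypothesis, a whole ladder of submersions of the transverse model and then feed each of them into the extension proposition quoted from \cite{DD}. Fix a foliated cocycle $(U_i,f_i,T,\gamma_{ij})_{i\in I}$ witnessing the transverse diagonality of $\mathcal{F}_q$: the $U_i$ are $\mathcal{F}$-distinguished and each $f_i(U_i)$ carries $\mathcal{F}$-transverse coordinates $(y_1^i,\dots,y_q^i)$ in which $J_{\gamma_{ij}}$ is diagonal. The first thing I would extract is the structural meaning of this diagonality: that $J_{\gamma_{ij}}$ is diagonal says $\partial(y_k^i\circ\gamma_{ij})/\partial y_l^j=0$ for $k\neq l$, i.e.\ the $k$-th component $y_k^i\circ\gamma_{ij}$ depends only on the single variable $y_k^j$. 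Hence, in these coordinates, $\gamma_{ij}$ is a product of one-variable diffeomorphisms, $\gamma_{ij}=(\gamma_{ij}^1,\dots,\gamma_{ij}^q)$ with $\gamma_{ij}^k=\gamma_{ij}^k(y_k^j)$. This is the central fact on which everything rests.

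Next, for each $s\in\{1,\dots,q-1\}$ I would introduce the local projection $p_s^i:f_i(U_i)\to\mathbb{R}^s$, $(y_1^i,\dots,y_q^i)\mapsto(y_1^i,\dots,y_s^i)$, onto the first $s$ coordinates. The product structure just obtained yields at once
\begin{equation*}
p_s^i\circ\gamma_{ij}=\gamma_{ij}^{(s)}\circ p_s^j,\qquad \gamma_{ij}^{(s)}(y_1,\dots,y_s)=(\gamma_{ij}^1(y_1),\dots,\gamma_{ij}^s(y_s)),
\end{equation*}
where $\gamma_{ij}^{(s)}$ is a local diffeomorphism involving only the first $s$ coordinates. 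Because truncation to the first $s$ coordinates commutes with composition precisely by the diagonal character, the $\gamma_{ij}^{(s)}$ inherit the cocycle condition from the $\gamma_{ij}$; so $(U_i,p_s^i\circ f_i,T_s,\gamma_{ij}^{(s)})_{i\in I}$ is a foliated cocycle of codimension $s$, with $T_s$ the transverse manifold obtained by gluing the $\mathbb{R}^s$ along the $\gamma_{ij}^{(s)}$, and the $p_s^i$ patch together into a submersion $T\to T_s$ whose fibers the $\gamma_{ij}$ preserve.

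I would then invoke the proposition quoted from \cite{DD}: since the local transition diffeomorphisms of $\mathcal{F}_q$ preserve the fibers of the submersion $T\to T_s$, the foliation $\mathcal{F}_q$ admits, for each $s$, an extension $\mathcal{F}_s$ of codimension $s$ and transverse model $T_s$, defined by the above cocycle. Finally I would check the chain: $p_s$ factors through $p_{s+1}$ (forgetting one more coordinate), so each fiber of $p_{s+1}^i$ sits inside a fiber of $p_s^i$; at the level of leaves this gives $\mathcal{F}_{s+1}\subset\mathcal{F}_s$, i.e.\ every leaf of $\mathcal{F}_s$ is a union of leaves of $\mathcal{F}_{s+1}$, hence of $\mathcal{F}_q$. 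This produces $\mathcal{F}_q\subset\mathcal{F}_{q-1}\subset\dots\subset\mathcal{F}_1$ with $\operatorname{codim}\mathcal{F}_s=s$, which is exactly the announced complete flag $\mathcal{D}_{\mathcal{F}_q}=(\mathcal{F}_{q-1},\dots,\mathcal{F}_1)$.

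The only genuine obstacle I anticipate is globalization: making sure that the projections $p_s^i$, a priori defined chart by chart, assemble into well-defined global objects, and that the truncated maps $\gamma_{ij}^{(s)}$ still satisfy the cocycle identity. I expect the diagonality of $J_{\gamma_{ij}}$ to be exactly what removes this difficulty, since it guarantees that the transverse transitions respect the whole flag of coordinate projections; once that verification is in place, the rest is a direct application of the cited proposition and the definition of an extension.
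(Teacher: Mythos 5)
Your proposal is correct, and it runs on the same engine as the paper's proof: the observation that diagonality of $J_{\gamma_{ij}}$ forces each component $y_{k}^{i}\circ \gamma_{ij}$ to depend on $y_{k}^{j}$ alone, hence the transition maps preserve the fibers of coordinate projections, after which the extension proposition of \cite{DD} (Proposition 2.3 in the paper) produces the extensions. The difference is one of architecture. You build the whole flag in a single pass: for every $s$ you truncate to the first $s$ coordinates, check that the truncated maps $\gamma_{ij}^{(s)}$ still form a cocycle, obtain all the $\mathcal{F}_{s}$ simultaneously, and read off the inclusions $\mathcal{F}_{s+1}\subset \mathcal{F}_{s}$ from the factorization of the projections. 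The paper instead proceeds by induction: it quotients each chart by the flow of $\frac{\partial }{\partial y_{q}^{i}}$, obtains the single extension $\mathcal{F}_{q-1}$ from Proposition 2.3, and then --- this is the step your route renders unnecessary --- proves that $\mathcal{F}_{q-1}$ is again transversally diagonal, its Jacobian in the induced coordinates being $J_{\gamma_{ij}^{q}}$ with its first row and first column deleted, so that the construction can be repeated to get $\mathcal{F}_{q-2},\dots ,\mathcal{F}_{1}$. Your version is shorter and exhibits the entire flag at once; the paper's version yields as a by-product the explicit diagonal Jacobians of every member of the flag, which is precisely the kind of matrix information exploited later (Th\'{e}or\`{e}me 3.7). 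Two small points would tighten your write-up: the passage from a diagonal Jacobian to the product form $\gamma_{ij}=\left( \gamma_{ij}^{1}(y_{1}^{j}),\dots ,\gamma_{ij}^{q}(y_{q}^{j})\right) $ requires the chart images to be connected in a suitable sense (e.g.\ cubes in the coordinates), so shrink the $U_{i}$ exactly as the paper does; and $T_{s}$ should be taken as the disjoint union $\underset{i\in I}{\cup }\,p_{s}^{i}\left( f_{i}\left( U_{i}\right) \right) $ rather than a glued quotient, which could fail to be Hausdorff --- this is also the paper's convention, $T^{q-1}=\underset{i\in I}{\cup }\,\mathcal{U}_{_{\theta _{i}^{q}}}$.
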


\begin{proof}
Soit $(U_{_{i}},f_{_{i}}^{q},T^{q},\gamma _{_{ji}}^{q})_{i\in I}$ un cocyle
feuillet\'{e} d\'{e}finissant le feuilletage transversalement diagonal $%
\mathcal{F}_{q}$ de codimension $q$ et $\left( \left(
y_{1,}^{i}y_{2,...,}^{i}y_{q}^{i}\right) \right) _{i\in I}$ les syst\`{e}mes
de coordonn\'{e}es $\mathcal{F}_{q}-$transverses sur les $f_{_{i}}^{q}\left(
U_{_{i}}\right) $ suivant lesquels $J_{\gamma _{_{ij}}^{q}}$ est diagonal.

Notons que $J_{\gamma _{_{ji}}^{q}}=\left( \varepsilon _{jirs}^{q}\right)
_{rs}$ \'{e}tant une matrice inversible \`{a} tous ses \'{e}l\'{e}ments
diagonaux non nuls.

Ceci dit, consid\'{e}rons sur chaque $f_{_{i}}^{q}\left( U_{_{i}}\right) $
le flot d\'{e}fini par le champ de vecteurs $\frac{\partial }{\partial
y_{q}^{i}}.$

Quitte \`{a} r\'{e}duire la "taille" des ouverts $U_{_{i}},$ on peut consid%
\'{e}rer un recouvrement $(U_{_{i}})_{i\in I}\ $de la vari\'{e}t\'{e} $M\ \ $%
tel que dans chaque $f_{_{i}}^{q}\left( U_{_{i}}\right) $ le flot de $\frac{%
\partial }{\partial y_{q}^{i}}$ est un feuilletage simple.

Soit $\theta _{i}^{q}$ la submersion d\'{e}finissant le flot $\mathcal{F}%
_{\theta _{i}^{q}}$ de $\frac{\partial }{\partial y_{q}^{i}}$ sur $%
f_{_{i}}^{q}\left( U_{_{i}}\right) $ et $\mathcal{U}_{_{\theta _{i}^{q}}}$
la vari\'{e}t\'{e} quotient du feuilletage simple $\mathcal{F}_{\theta
_{i}^{q}}.$

La vari\'{e}t\'{e} $T^{q}$ peut \^{e}tre consid\'{e}r\'{e}e comme une union
disjointe des $f_{_{i}}^{q}\left( U_{_{i}}\right) $. Par cons\'{e}quent nous
pouvons affirmer que les submersions $\theta _{i}^{q}$ d\'{e}finissent une
submersion $\theta ^{q}$sur $T^{q}$ dont la restriction \`{a} chaque $%
f_{_{i}}^{q}\left( U_{_{i}}\right) $ est $\theta _{i}^{q}.$

Comme 
\begin{equation*}
\left( \gamma _{_{ji}}^{q}\right) _{\ast }\left( \frac{\partial }{\partial
y_{q}^{i}}\right) =\left( \varepsilon _{ji}^{q}\right) _{ji}\left( \frac{%
\partial }{\partial y_{q}^{i}}\right) =\varepsilon _{ji11}^{q}.\frac{%
\partial }{\partial y_{q}^{j}}\text{ \ et }\varepsilon _{ji11}^{q}\neq 0
\end{equation*}%
alors les $\gamma _{_{ji}}^{q}$ pr\'{e}servent les fibres de la submersion $%
\theta ^{q}.$

Il r\'{e}sulte de cela $\left( \cite{DAD},\cite{DD}\right) $( \textit{%
cf.prop.2.3 et def.2.2}) que le feuilletage $\mathcal{F}_{q}$ de codimension 
$q$ a une extension $\mathcal{F}_{q-1}$ de codimension $q-1$ d\'{e}finie par
le cocycle feuillet\'{e} $(U_{_{i}}$,$f_{_{i}}^{q-1},T^{q-1},\gamma
_{_{ji}}^{q-1})_{i\in I}$ o\`{u} 
\begin{equation*}
\gamma _{_{ji}}^{q-1}:\mathcal{U}_{_{\theta _{i}^{q}}}\rightarrow \mathcal{U}%
_{_{\theta _{j}^{q}}}
\end{equation*}%
est le diff\'{e}omorphisme induit par le diff\'{e}omorphisme local \newline
$\gamma _{_{ji}}^{q}:f_{_{i}}^{q}\left( U_{_{i}}\right) \rightarrow
f_{_{j}}^{q}\left( U_{j}\right) $ sur les vari\'{e}t\'{e}s quotients locales 
$\mathcal{U}_{_{\theta _{i}^{q}}}$ et $\mathcal{U}_{_{\theta _{j}^{q}}}.$ On
a aussi $\left( \cite{DAD},\cite{DD}\right) $ ( \textit{cf def.2.2}): 
\begin{equation*}
T^{q-1}=\underset{i\in I}{\cup }\theta _{i}^{q}\circ f_{_{i}}^{q}\left(
U_{_{i}}\right) =\underset{i\in I}{\cup }\mathcal{U}_{_{\theta _{i}^{q}}},
\end{equation*}%
\begin{equation*}
f_{_{i}}^{q-1}=\theta _{i}^{q}\circ f_{_{i}}^{q}\text{ et \ }\gamma
_{_{ji}}^{q-1}\circ \theta _{i}^{q}=\theta _{j}^{q}\circ \gamma _{_{ji}}^{q}.
\end{equation*}

Montrons maintenant que le feuilletage $\mathcal{F}_{q-1}$ est
transversalement diagonal.

En utilisant l'\'{e}galit\'{e} $\gamma _{_{ji}}^{q-1}\circ \theta
_{i}^{q}=\theta _{j}^{q}\circ \gamma _{_{ji}}^{q}$ on obtient:

i) 
\begin{equation*}
\left( \gamma _{_{ji}}^{q-1}\right) _{\ast }\circ \left( \theta
_{i}^{q}\right) _{\ast }\left( \frac{\partial }{\partial y_{q}^{i}}\right)
=\left( \theta _{j}^{q}\right) _{\ast }\circ \left( \gamma
_{_{ji}}^{q}\right) _{\ast }\left( \frac{\partial }{\partial y_{q}^{i}}%
\right) =J_{\theta _{j}^{q}}\left( \varepsilon _{ji11}^{q}.\frac{\partial }{%
\partial y_{q}^{j}}\right) =0\text{,}
\end{equation*}

ii) pour $k\neq q,$ 
\begin{eqnarray*}
\left( \gamma _{_{ji}}^{q-1}\right) _{\ast }\circ \left( \theta
_{i}^{q}\right) _{\ast }\left( \frac{\partial }{\partial y_{k}^{i}}\right)
&=&\left( \theta _{j}^{q}\right) _{\ast }\circ \left( \gamma
_{_{ji}}^{q}\right) _{\ast }\left( \frac{\partial }{\partial y_{k}^{i}}%
\right) \\
&=&\left( \theta _{j}^{q}\right) _{\ast }\left( \varepsilon _{ij\left(
q-k+1\right) \left( q-k+1\right) }^{q}.\frac{\partial }{\partial y_{k}^{j}}%
\right) \\
&=&\varepsilon _{ij\left( q-k+1\right) \left( q-k+1\right) }^{q}\left(
\theta _{j}^{q}\right) _{\ast }\left( \frac{\partial }{\partial y_{k}^{j}}%
\right) \text{ \ \ \ \ }\left( \ast \right) .
\end{eqnarray*}

On v\'{e}rifie aisement en utilisant le syst\^{e}me differentielle int\'{e}%
grable%
\begin{equation*}
P_{x}^{qi}=\left\{ \overset{q-1}{\underset{k=1}{\sum }}f_{ik}\left( x\right)
.\frac{\partial }{\partial y_{k}^{i}}\left( x\right) \text{ }/\text{ }%
f_{ik}:f_{_{i}}^{q}\left( U_{_{i}}\right) \longrightarrow 
\mathbb{R}
\text{ de classe }C^{\infty }\right\}
\end{equation*}%
sur $f_{i}^{q}(U_{i})$\ que pour $k\neq q$, les $q-1$ champs de vecteurs $%
\left( \theta _{i}^{q}\right) _{\ast }\left( \frac{\partial }{\partial
y_{k}^{i}}\right) $ sur $\mathcal{U}_{_{\theta _{i}^{q}}}$ d\'{e}finissent
un syst\`{e}me de coordonn\'{e}es $\mathcal{F}_{\theta _{i}^{q}}$-transverse
sur $\mathcal{U}_{_{\theta _{i}^{q}}}.$

Soit $\left( z_{q-1}^{i},\text{..., }z_{1}^{i}\right) $ ce syst\`{e}me de
coordonn\'{e}es $\mathcal{F}_{\theta _{i}^{q}}$-transverse sur $\mathcal{U}%
_{_{\theta _{i}^{q}}}.$ On a $\left( \theta _{i}^{q}\right) _{\ast }\left( 
\frac{\partial }{\partial y_{k}^{i}}\right) =\frac{\partial }{\partial
z_{k}^{i}}.$ Et, en utilisant les \'{e}galit\'{e}s \ $\left( \ast \right) $
on obtient 
\begin{equation*}
\left( \gamma _{_{ji}}^{q-1}\right) _{\ast }\left( \frac{\partial }{\partial
z_{k}^{i}}\right) =\varepsilon _{ij\left( q-k+1\right) \left( q-k+1\right)
}^{q}\left( \frac{\partial }{\partial z_{k}^{j}}\right) .
\end{equation*}%
Ainsi le feuilletage $\mathcal{F}_{q-1}$ est transversalement diagonal. En
fait, relativement au syst\`{e}me de coordonn\'{e}es $\left( z_{q-1}^{i},%
\text{..., }z_{1}^{i}\right) $, la matrice diagonale$J_{\gamma
_{_{ji}}^{q-1}}$ est la matrice $J_{\gamma _{_{ji}}^{q}}$ priv\'{e}e de sa
premi\`{e}re ligne et de sa premi\`{e}re colonne.

On construit $\mathcal{F}_{q-2},$ $\mathcal{F}_{q-3\text{ }},$ ... , $%
\mathcal{F}_{1}$ en utilisant la m\^{e}me technique de construction de $%
\mathcal{F}_{q-1}.$
\end{proof}

En utilisant le th\'{e}or\`{e}me 2.4 de d\'{e}composition de
Blumenthal-Hebda $\cite{BH},$ on montre le th\'{e}or\`{e}me suivant en
utilisant les m\^{e}mes techniques de demonstration du th\'{e}or\`{e}me 2.5
dans $\cite{DIA2}$ :

\begin{theorem}
Soit $\mathcal{D}_{_{\mathcal{F}_{q}\text{ }}}=\left( \mathcal{F}_{q-1},%
\text{ }\mathcal{F}_{q-2},\text{ }...,\text{ }\mathcal{F}_{1}\right) $ un
drapeau complet riemannien d'extension d'un feuilletage riemannien $\mathcal{%
F}_{q}$ de codimension $q$ sur une vari\'{e}t\'{e} compacte connexe $M$ et
pour tout $k\in \left\{ 1,...,q\right\} ,$ $\widetilde{\mathcal{F}}_{k}$ d%
\'{e}signe le feuilletage relev\'{e} de $\mathcal{F}_{k}$ sur le rev\^{e}%
tement universel $\widetilde{M}$ de $M.$

S'il existe une m\'{e}trique $g$ quasi-fibr\'{e}e commune pour $\mathcal{F}%
_{q}$ et pour chaque feuilletage du drapeau complet d'extension $\mathcal{D}%
_{_{\mathcal{F}_{q}\text{ }}}$ alors:

i) Chaque feuilletage $\mathcal{F}_{k}$ est transversalement int\'{e}grable
et transversalement parall\'{e}lisable. Les champs de vecteurs du parall\'{e}%
lisme $\mathcal{F}_{k}-$transverse $\left( Y_{s}\right) _{0\leq s\leq k-1%
\text{ \ \ }}$ sont orthogonaux et chaque champ de vecteurs $Y_{s}$ est une
section\ unitaire de $\left( T\mathcal{F}_{s+1}\right) ^{\perp }\cap \left( T%
\mathcal{F}_{s}\right) $ .

ii) Le drapeau $\mathcal{D}_{_{\mathcal{F}_{q}\text{ }}}$ se rel\`{e}ve sur $%
\widetilde{M}$ en un drapeau simple \newline
$\mathcal{D}_{_{\widetilde{\mathcal{F}}_{q}\text{ }}}=\left( \widetilde{%
\mathcal{F}}_{q-1},\widetilde{\mathcal{F}}_{q-2},\text{ }...,\text{ }%
\widetilde{\mathcal{F}}_{1}\right) $ d'extension riemannien du feuilletage
riemannien $\widetilde{\mathcal{F}}_{q}.$
\end{theorem}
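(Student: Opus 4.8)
The plan is to reproduce, in the transverse setting of the flag $\mathcal{F}_{q}\subset\mathcal{F}_{q-1}\subset\cdots\subset\mathcal{F}_{1}$, the construction carried out in Theorem 2.5, and then to lift the resulting transversely integrable foliations to $\widetilde{M}$ by means of the Blumenthal--Hebda decomposition (Theorem 2.4). Since $M$ is compact, $(M,g)$ is complete, so Theorem 2.4 applies to any transversely integrable Riemannian foliation on $M$.

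First I would use the common bundle-like metric $g$ together with the nested inclusions $T\mathcal{F}_{q}\subset T\mathcal{F}_{q-1}\subset\cdots\subset T\mathcal{F}_{1}\subset TM$ to build a global orthonormal transverse frame. Writing $\mathcal{F}_{0}$ for the trivial foliation with $T\mathcal{F}_{0}=TM$, each inclusion $T\mathcal{F}_{s+1}\subset T\mathcal{F}_{s}$ has codimension one, so $L_{s}:=(T\mathcal{F}_{s+1})^{\perp}\cap(T\mathcal{F}_{s})$ is a line bundle for $0\le s\le q-1$. After normalising the orientation of the flows, exactly as in Theorem 2.5, I choose a unit section $Y_{s}$ of $L_{s}$; by construction the $Y_{s}$ are pairwise orthogonal and one obtains the orthogonal splitting $TM=T\mathcal{F}_{k}\oplus\langle Y_{0},\ldots,Y_{k-1}\rangle$, with $(T\mathcal{F}_{k})^{\perp}=\langle Y_{0},\ldots,Y_{k-1}\rangle$. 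This already gives the orthogonality assertion of (i) and exhibits each $Y_{s}$ as a unit section of $(T\mathcal{F}_{s+1})^{\perp}\cap(T\mathcal{F}_{s})$.

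Next I would show that $(Y_{0},\ldots,Y_{k-1})$ is a transverse parallelism of $\mathcal{F}_{k}$ and that this transverse bundle is involutive. Both statements are local and rest on the fact that $g$ is simultaneously bundle-like for every $\mathcal{F}_{s}$ of the flag: this forces each $Y_{s}$ to be a foliated (basic) field and, following the computation of the structure functions in Theorem 2.5, it yields bracket relations of the form $[Y_{i},Y_{j}]=c_{ij}Y_{i}$ for $i<j$, the $c_{ij}$ being basic for the appropriate foliation. In particular every bracket $[Y_{i},Y_{j}]$ with $0\le i,j\le k-1$ stays inside $\langle Y_{0},\ldots,Y_{k-1}\rangle$, so by Frobenius the orthogonal distribution $(T\mathcal{F}_{k})^{\perp}$ is integrable and defines the complementary foliation $\mathcal{F}_{k}^{\perp}$; hence each $\mathcal{F}_{k}$ is transversely integrable and transversely parallelizable, which is assertion (i). I expect this step to be the main obstacle, since it is precisely here that the hypothesis of a metric common to all the $\mathcal{F}_{s}$ is indispensable: one must propagate the simultaneous bundle-like condition into the vanishing of the $T\mathcal{F}_{k}$-component of the brackets, that is, into the integrability of the orthogonal distribution.

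Finally, for (ii) I would apply Theorem 2.4 to each transversely integrable Riemannian foliation $\mathcal{F}_{k}$ on the complete manifold $(M,g)$. It yields a diffeomorphism $\widetilde{M}\cong\widetilde{F}_{k}\times\widetilde{F}_{k}^{\perp}$ in which every leaf of the lifted foliation $\widetilde{\mathcal{F}}_{k}$ has the form $\widetilde{F}_{k}\times\{p\}$; thus $\widetilde{\mathcal{F}}_{k}$ is the simple foliation defined by the projection $\widetilde{M}\to\widetilde{F}_{k}^{\perp}$. Because all the splittings come from the same metric $g$ and the distributions are nested, the complementary distributions satisfy $T\widetilde{\mathcal{F}}_{1}^{\perp}\subset\cdots\subset T\widetilde{\mathcal{F}}_{q}^{\perp}$ and the product decompositions are mutually compatible; using Theorem 2.7 to organise the brackets in each product $\widetilde{F}_{k}\times\widetilde{F}_{k}^{\perp}$, I would then verify that the inclusions $\widetilde{\mathcal{F}}_{q}\subset\widetilde{\mathcal{F}}_{q-1}\subset\cdots\subset\widetilde{\mathcal{F}}_{1}$ and the Riemannian character persist after lifting. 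This shows that $\mathcal{D}_{\mathcal{F}_{q}}$ lifts to the simple flag $\mathcal{D}_{\widetilde{\mathcal{F}}_{q}}=(\widetilde{\mathcal{F}}_{q-1},\widetilde{\mathcal{F}}_{q-2},\ldots,\widetilde{\mathcal{F}}_{1})$ of Riemannian extension of $\widetilde{\mathcal{F}}_{q}$, which is assertion (ii).
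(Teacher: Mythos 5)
Your proposal is correct and takes essentially the same approach as the paper: for this theorem the paper records no detailed proof at all, saying only that it follows from the Blumenthal--Hebda decomposition (th\'{e}or\`{e}me 2.4) combined with the techniques of proof of th\'{e}or\`{e}me 2.5 of \cite{DIA2}, which is precisely the recipe you flesh out (unit sections $Y_{s}$ of the line bundles $\left( T\mathcal{F}_{s+1}\right) ^{\perp }\cap T\mathcal{F}_{s}$, bracket relations and Frobenius for transverse integrability and parallelizability, then th\'{e}or\`{e}me 2.4 for the simple lifted flag on $\widetilde{M}$). If anything, your write-up is more explicit than the paper's own one-sentence justification.
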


On notera que le parall\'{e}lisme $\mathcal{F}_{q}-$transverse $\left(
Y_{s}\right) _{0\leq s\leq q-1\text{ \ \ }}$ sera dit associ\'{e} \`{a} $%
\mathcal{D}_{_{\mathcal{F}_{q}\text{ }}}.$

Le r\'{e}sultat qui suit est la version locale du th\'{e}or\`{e}me de d\'{e}%
composition de Blumenthal-Hebda $\cite{BH}:$

\begin{theorem}
Soit $\mathcal{F}$ un feuilletage riemannien transversalement int\'{e}%
grable\ sur une vari\'{e}t\'{e} riemannienne connexe compacte $\left(
M,g\right) $.

Alors il existe un recouvrement $(U_{_{i}})_{i\in I}$ d'ouverts de la vari%
\'{e}t\'{e} $M$ tel que:

i) Chaque ouvert $U_{_{i}}$ est diff\'{e}omorphe au produit $L_{i}\times
L_{i}^{\bot }$ o\`{u} $L_{i}$ et $L_{i}^{\bot }$ sont respectivement des
feuilles de $\mathcal{F}_{U_{_{i}}}$ et $\mathcal{F}_{U_{_{i}}}^{\bot }$ o%
\`{u} $\mathcal{F}_{U_{_{i}}}$ et $\mathcal{F}_{U_{_{i}}}^{\bot }$ sont
respectivement les restrictions respectives de $\mathcal{F}$ et $\mathcal{F}%
^{\bot }$ \`{a} $U_{_{i}}.$

ii) Toute feuille de $\mathcal{F}_{U_{_{i}}}$ est diff\'{e}omorphe \`{a} $%
L_{i}\times \left\{ p\right\} $ o\`{u} $p\in L_{i}^{\bot }$ et\ toute
feuille de $\mathcal{F}_{U_{_{i}}}^{\bot }$ est diff\'{e}omorphe \`{a} $%
\left\{ p\right\} \times L_{i}^{\bot }$ o\`{u} $p\in L_{i}.$
\end{theorem}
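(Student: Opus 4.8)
The plan is to deduce the statement from the global decomposition of Blumenthal--Hebda (Theorem 2.4) by pushing the product structure of the universal cover down through the covering projection. First I would record that transverse integrability of $\mathcal{F}$ is exactly what makes the orthogonal distribution $(T\mathcal{F})^{\perp}$ integrable, so that $\mathcal{F}^{\perp}$ is a genuine foliation and the statement about its leaves is meaningful; moreover $M$ compact and connected is complete by Hopf--Rinow, so Theorem 2.4 applies and yields a diffeomorphism $\widetilde{M}\cong\widetilde{L}\times\widetilde{H}$ under which the leaves of $\widetilde{\mathcal{F}}$ are the slices $\widetilde{L}\times\{b\}$ and the leaves of $\widetilde{\mathcal{F}}^{\perp}$ are the slices $\{a\}\times\widetilde{H}$.

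Next, fix $x\in M$ and a lift $z\in\pi^{-1}(x)$, written $z\simeq(a_{0},b_{0})$ in these product coordinates. Since $\pi\colon\widetilde{M}\to M$ is a covering, hence a local diffeomorphism, I choose an evenly covered open set $W\ni z$ on which $\pi$ is injective; because products of opens form a basis of the topology of $\widetilde{L}\times\widetilde{H}$, I can then pick connected opens $\widetilde{A}\ni a_{0}$ in $\widetilde{L}$ and $\widetilde{B}\ni b_{0}$ in $\widetilde{H}$ with $\widetilde{A}\times\widetilde{B}\subset W$. Consequently $\pi$ restricts to a diffeomorphism of $\widetilde{A}\times\widetilde{B}$ onto $U:=\pi(\widetilde{A}\times\widetilde{B})$, and it carries $\widetilde{\mathcal{F}}$- and $\widetilde{\mathcal{F}}^{\perp}$-leaves to $\mathcal{F}$- and $\mathcal{F}^{\perp}$-leaves.

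Then I would identify the restricted foliations inside $U$. Intersecting a leaf $\widetilde{L}\times\{b\}$ of $\widetilde{\mathcal{F}}$ with $\widetilde{A}\times\widetilde{B}$ gives $\widetilde{A}\times\{b\}$, which is connected because $\widetilde{A}$ is, hence it is a full leaf of the restriction $\widetilde{\mathcal{F}}|_{\widetilde{A}\times\widetilde{B}}$; likewise $\{a\}\times\widetilde{B}$ is a full leaf of $\widetilde{\mathcal{F}}^{\perp}|_{\widetilde{A}\times\widetilde{B}}$. Transporting by $\pi$ and setting $L_{i}:=\pi(\widetilde{A}\times\{b_{0}\})$ and $L_{i}^{\perp}:=\pi(\{a_{0}\}\times\widetilde{B})$, I obtain $U\cong L_{i}\times L_{i}^{\perp}$ with the leaves of $\mathcal{F}_{U}$ corresponding to the slices $L_{i}\times\{p\}$ and those of $\mathcal{F}^{\perp}_{U}$ to $\{p\}\times L_{i}^{\perp}$, which is precisely (i) and (ii). Letting $x$ range over $M$ produces the required open cover $(U_{i})_{i\in I}$.

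The main obstacle is not conceptual, since the whole geometric content is supplied by Theorem 2.4, but lies in the bookkeeping of this last step: one must ensure that restricting the two foliations to $U$ does not fragment a slice into several pieces, which is exactly why $\widetilde{A}$ and $\widetilde{B}$ are taken connected and inside a single evenly covered sheet, so that each slice meets $U$ in one connected plaque equal to a full leaf of the restriction. I note that the result can also be obtained directly, without completeness or the universal cover, by simultaneously straightening the two complementary orthogonal integrable distributions $T\mathcal{F}$ and $T\mathcal{F}^{\perp}$: in a flow box for $\mathcal{F}$ the fields spanning $T\mathcal{F}^{\perp}$ are forced to commute, their brackets lying in $T\mathcal{F}\cap T\mathcal{F}^{\perp}=0$, and their commuting flows furnish the product chart; this is the local Frobenius-type argument underlying the global theorem.
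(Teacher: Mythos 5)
Your main argument is correct and is essentially the proof the paper gives: both deduce the statement from Theorem 2.4 applied to the universal cover $p:\widetilde{M}\rightarrow M$ (compactness supplying completeness via Hopf--Rinow), and both transport the product structure $\widetilde{M}\simeq \widetilde{L}\times \widetilde{H}$ down to an evenly covered open set of $M$. The only real difference is the descent mechanism: you restrict $p$ to a single connected product neighborhood $\widetilde{A}\times \widetilde{B}$ contained in one sheet, where $p$ is injective, and push the sliced foliations forward; the paper instead assembles a diffeomorphism $\phi _{i}$ on the whole preimage $p^{-1}\left( U_{i}\right) =\underset{k}{\cup }\widetilde{U}_{i}^{k}$, verifies its equivariance $\overline{\sigma }_{iab}\circ \phi _{i}=\phi _{i}\circ \sigma _{iab}$ under the deck transformations $\sigma _{iab}\in \pi _{1}\left( M\right) $, and passes to the quotient to obtain $\overline{\phi }_{i}:U_{i}\rightarrow L_{i}\times L_{i}^{\bot }$. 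Your bookkeeping is lighter and loses nothing relevant: the equivariant construction only adds that the resulting diffeomorphism does not depend on the chosen sheet, which the statement never requires. Your handling of the one genuine pitfall (a slice fragmenting into several leaves of the restriction) by taking $\widetilde{A}$ and $\widetilde{B}$ connected inside one sheet is exactly the point that needs care.

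Your closing remark, by contrast, is a genuinely different route that the paper does not take, and it is worth recording: the statement is purely local, and your Frobenius-type argument proves it for any two complementary integrable distributions, with no metric, no compactness, and no universal cover at all, whereas the paper's proof makes the conclusion look dependent on that global machinery. One precision is needed for that sketch to be correct: the commuting fields must be the basic (projectable) lifts $Y_{j}$ of coordinate fields on the local leaf space of the flow box, i.e. sections of $T\mathcal{F}^{\bot }$ projecting to $\frac{\partial }{\partial t_{j}}$ under the local quotient submersion. For such fields the bracket projects to zero, hence lies in $T\mathcal{F}$, while integrability of $T\mathcal{F}^{\bot }$ puts it in $T\mathcal{F}^{\bot }$, so that $\left[ Y_{j},Y_{k}\right] \in T\mathcal{F}\cap T\mathcal{F}^{\bot }=0$. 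For arbitrary sections spanning $T\mathcal{F}^{\bot }$ the bracket has no reason to lie in $T\mathcal{F}$, so ``forced to commute'' is false without this choice.
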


\begin{proof}
i) Soit $p:\widetilde{M}\rightarrow M\ $le rev\^{e}tement universel de $M,$ $%
\pi _{_{1}}\left( M\right) $ le groupe fondamental de $M$,\ $\widetilde{%
\mathcal{F}}$ le relev\'{e} de $\mathcal{F}$ sur $\widetilde{M}$, $%
(U_{_{i}})_{i\in I}$ un recouvrement d'ouverts de la vari\'{e}t\'{e} $M$ par
des ouverts de trivialisation local du rev\^{e}tement universel $\widetilde{M%
}$ de$\ M$, $\mathcal{F}_{U_{_{i}}}$ et $\mathcal{F}_{U_{_{i}}}^{\bot }$ les
restrictions respectives de $\mathcal{F}$ et $\mathcal{F}^{\bot }$ \`{a} $%
U_{_{i}}.$

On a $p^{-1}\left( U_{_{i}}\right) =\underset{k\in K}{\cup }\widetilde{U}%
_{_{i}}^{k}.$

Dans tout ce qui suit on fixe $a\in K.$

Quitte \`{a} r\'{e}duire la "taille" de l'ouvert $U_{_{i}},$ on peut
supposer en utilisant le th\'{e}or\`{e}me 2.4 de d\'{e}composition de
Blumenthal-Hebda qu'il existe un diff\'{e}omorphisme $\phi _{iaa}:\widetilde{%
U}_{_{i}}^{a}\rightarrow \widetilde{L}_{ia}\times \widetilde{L}_{ia}^{\bot }$
o\`{u} $\widetilde{L}_{ia}$ est une feuille de la restriction $\widetilde{%
\mathcal{F}}_{\widetilde{U}_{_{i}}^{a}}$ de $\widetilde{\mathcal{F}}$ \`{a} $%
\widetilde{U}_{_{i}}^{a}$ et $\widetilde{L}_{ia}^{\bot }$ est une feuille de
la restriction $\widetilde{\mathcal{F}}_{\widetilde{U}_{_{i}}^{a}}^{\bot }$
de $\widetilde{\mathcal{F}}^{\bot }$ \`{a} $\widetilde{U}_{_{i}}^{a}.$

Soit $b\in K.$ Comme $p\left( \widetilde{U}_{_{i}}^{a}\right) =p\left( 
\widetilde{U}_{_{i}}^{b}\right) =U_{_{i}}$ alors il existe \newline
$\sigma _{iab}\in \pi _{_{1}}\left( M\right) $ tel que 
\begin{equation*}
\sigma _{iab}\left( \widetilde{U}_{_{i}}^{a}\right) =\widetilde{U}%
_{_{i}}^{b}.
\end{equation*}%
Par construction des feuilletages relev\'{e}s $\widetilde{\mathcal{F}}$ et $%
\widetilde{\mathcal{F}}^{\bot }$ on a 
\begin{equation*}
\sigma _{iab}\left( \widetilde{\mathcal{F}}_{\widetilde{U}%
_{_{i}}^{a}}\right) =\widetilde{\mathcal{F}}_{\widetilde{U}_{_{i}}^{b}}\text{
et\ }\sigma _{iab}\left( \widetilde{\mathcal{F}}_{\widetilde{U}%
_{_{i}}^{a}}^{\bot }\right) =\widetilde{\mathcal{F}}_{\widetilde{U}%
_{_{i}}^{b}}^{\bot }.
\end{equation*}

Ainsi $\sigma _{iab}\left( \widetilde{L}_{ia}\right) $ est une feuille de $%
\widetilde{\mathcal{F}}_{\widetilde{U}_{_{i}}^{b}}$ et $\sigma _{iab}\left( 
\widetilde{L}_{ia}^{\bot }\right) $ est une feuille de $\widetilde{\mathcal{F%
}}_{\widetilde{U}_{_{i}}^{b}}^{\bot }.$

Consid\'{e}rons $\sigma _{iab}^{1}$ la restriction de $\sigma _{iab}$ \`{a} $%
\widetilde{L}_{ia}$ et $\sigma _{iab}^{2}$ la restriction de $\sigma _{iab}$ 
\`{a} $\widetilde{L}_{ia}^{\bot }.$

On a $\sigma _{iab}^{1}:\widetilde{L}_{ia}\rightarrow \sigma _{iab}\left( 
\widetilde{L}_{ia}\right) $ et $\sigma _{iab}^{2}:\widetilde{L}_{ia}^{\bot
}\rightarrow \sigma _{iab}\left( \widetilde{L}_{ia}^{\bot }\right) $ qui
sont des diff\'{e}omorphismes. Par cons\'{e}quent $\sigma _{iab}$ induit un
diff\'{e}omorphisme 
\begin{equation*}
\overline{\sigma }_{iab}:\widetilde{L}_{ia}\times \widetilde{L}_{ia}^{\bot
}\rightarrow \sigma _{iab}^{1}\left( \widetilde{L}_{ia}\right) \times \sigma
_{iab}^{2}\left( \widetilde{L}_{ia}^{\bot }\right)
\end{equation*}%
tel que 
\begin{equation*}
\overline{\sigma }_{iab}=\left( \sigma _{iab}^{1}\circ p_{ia}^{1},\sigma
_{iab}^{2}\circ p_{ia}^{2}\right)
\end{equation*}%
o\`{u} $p_{ia}^{1}:\widetilde{L}_{ia}\times \widetilde{L}_{ia}^{\bot
}\rightarrow \widetilde{L}_{ia}$ est la premi\`{e}re projection et $%
p_{ia}^{2}:\widetilde{L}_{ia}\times \widetilde{L}_{ia}^{\bot }\rightarrow 
\widetilde{L}_{ia}$ est la seconde projection.

Il r\'{e}sulte de ce qui pr\'{e}c\`{e}de qu'il existe un diff\'{e}omorphisme 
$\phi _{iab}$ qui rend le diagramme suivant commutatif:%
\begin{equation*}
\begin{array}{ccc}
\widetilde{U}_{_{i}}^{a} & \overset{\phi _{iaa}}{\rightarrow } & \widetilde{L%
}_{ia}\times \widetilde{L}_{ia}^{\bot } \\ 
\downarrow \sigma _{iab} &  & \downarrow \overline{\sigma }_{iab} \\ 
\widetilde{U}_{_{i}}^{b} & \overset{\phi _{iab}}{\rightarrow } & \sigma
_{iab}^{1}\left( \widetilde{L}_{ia}\right) \times \sigma _{iab}^{2}\left( 
\widetilde{L}_{ia}^{\bot }\right)%
\end{array}%
.
\end{equation*}

On construit ainsi un diff\'{e}omorphisme de $p^{-1}\left( U_{_{i}}\right) =%
\underset{k\in K}{\cup }\widetilde{U}_{_{i}}^{k}$ sur\newline
$\underset{k\in K}{\cup }\phi _{ik}\left( \widetilde{U}_{_{i}}^{k}\right) =%
\underset{k\in K}{\cup }\sigma _{iak}^{1}\left( \widetilde{L}_{ia}\right)
\times \sigma _{iak}^{2}\left( \widetilde{L}_{ia}^{\bot }\right) $ dont la
restriction \`{a} $\widetilde{U}_{_{i}}^{b}$ est $\phi _{iab}.$ Ce diff\'{e}%
omorphisme ne d\'{e}pend pas du choix de l'ouvert $\widetilde{U}_{_{i}}^{a}$
mais seulement de $U_{_{i}}$, de la feuille 
\begin{equation*}
L_{i}=p\left( \sigma _{iab}^{1}\left( \widetilde{L}_{ia}\right) \right)
=p\left( \widetilde{L}_{ia}\right)
\end{equation*}
de $\mathcal{F}_{U_{_{i}}}$ et de la feuille 
\begin{equation*}
L_{i}^{\bot }=p\left( \sigma _{iab}^{2}\left( \widetilde{L}_{ia}^{\bot
}\right) \right) =p\left( \widetilde{L}_{ia}^{\bot }\right)
\end{equation*}
de $\mathcal{F}_{U_{_{i}}}^{\bot }.$ Dans la suite, ce diff\'{e}omorphisme\
ce notera $\phi _{i}.$

On peut donc dire en remarquant que $\sigma _{iaa}=Id,$ $\sigma
_{iaa}^{1}=Id $ et $\sigma _{iaa}^{2}=Id$ qu'on a $\sigma _{iab}\left( 
\widetilde{U}_{_{i}}^{a}\right) =\widetilde{U}_{_{i}}^{b}$, 
\begin{equation*}
\phi _{i}\left( \widetilde{U}_{_{i}}^{b}\right) =\sigma _{iab}^{1}\left( 
\widetilde{L}_{ia}\right) \times \sigma _{iab}^{2}\left( \widetilde{L}%
_{ia}^{\bot }\right) \text{ et }\overline{\sigma }_{iab}\circ \phi _{i}=\phi
_{i}\circ \sigma _{iab}\text{\ \ }
\end{equation*}%
et l'\'{e}galit\'{e}%
\begin{equation*}
\overline{\sigma }_{iab}=\left( \sigma _{iab}^{1}\circ p_{ia}^{1},\sigma
_{iab}^{2}\circ p_{ia}^{2}\right)
\end{equation*}%
entraine que%
\begin{eqnarray*}
\frac{\phi _{i}\left( p^{-1}\left( U_{_{i}}\right) \right) }{\pi
_{_{1}}\left( M\right) } &=&\frac{\underset{k\in K}{\cup }\sigma
_{iak}^{1}\left( \widetilde{L}_{ia}\right) \times \sigma _{iak}^{2}\left( 
\widetilde{L}_{ia}^{\bot }\right) }{\pi _{_{1}}\left( M\right) } \\
&=&\frac{\underset{k\in K}{\cup }\sigma _{iak}^{1}\left( \widetilde{L}%
_{ia}\right) }{\pi _{_{1}}\left( M\right) }\times \frac{\underset{k\in K}{%
\cup }\sigma _{iak}^{2}\left( \widetilde{L}_{ia}^{\bot }\right) }{\pi
_{_{1}}\left( M\right) }.
\end{eqnarray*}

Par construction de $\widetilde{\mathcal{F}}$ et $\widetilde{\mathcal{F}}%
^{\bot }$, $\sigma _{iak}^{1}$ \'{e}tant la restriction de $\sigma _{iak}$ 
\`{a} $\widetilde{L}_{ia}$ et $\sigma _{iak}^{2}$ la restriction de $\sigma
_{iak}$ \`{a} $\widetilde{L}_{ia}^{\bot },$ on a: 
\begin{equation*}
\frac{\underset{k\in I}{\cup }\sigma _{iak}^{1}\left( \widetilde{L}%
_{ia}\right) }{\pi _{_{1}}\left( M\right) }=L_{i}\text{ et }\frac{\underset{%
k\in I}{\cup }\sigma _{iak}^{2}\left( \widetilde{L}_{ia}^{\bot }\right) }{%
\pi _{_{1}}\left( M\right) }=L_{i}^{\bot }.
\end{equation*}

L'\'{e}galit\'{e} $\overline{\sigma }_{iab}\circ \phi _{i}=\phi _{i}\circ
\sigma _{iab}$ montre que $\phi _{i}$ est invariant par l'action de $\pi
_{_{1}}\left( M\right) .$ Par cons\'{e}quent $\phi _{i}$ passe au quotient
sous l'action de $\pi _{_{1}}\left( M\right) $ en un diff\'{e}omorphisme 
\begin{equation*}
\overline{\phi }_{i}:\frac{p^{-1}\left( U_{_{i}}\right) }{\pi _{_{1}}\left(
M\right) }\rightarrow \frac{\phi _{i}\left( p^{-1}\left( U_{_{i}}\right)
\right) }{\pi _{_{1}}\left( M\right) }.
\end{equation*}

Comme $\frac{p^{-1}\left( U_{_{i}}\right) }{\pi _{_{1}}\left( M\right) }%
=U_{_{i}}$ et 
\begin{eqnarray*}
\frac{\phi _{i}\left( p^{-1}\left( U_{_{i}}\right) \right) }{\pi
_{_{1}}\left( M\right) } &=&\frac{\underset{k\in K}{\cup }\sigma
_{iak}^{1}\left( \widetilde{L}_{ia}\right) }{\pi _{_{1}}\left( M\right) }%
\times \frac{\underset{k\in K}{\cup }\sigma _{iak}^{2}\left( \widetilde{L}%
_{ia}^{\bot }\right) }{\pi _{_{1}}\left( M\right) } \\
&=&L_{i}\times L_{i}^{\bot }
\end{eqnarray*}%
alors $\phi _{i}$ passe au quotient sous l'action de $\pi _{_{1}}\left(
M\right) $ en un diff\'{e}omorphisme 
\begin{equation*}
\overline{\phi }_{i}:U_{_{i}}\rightarrow L_{i}\times L_{i}^{\bot }.
\end{equation*}

ii) Soit $\widetilde{\mathcal{F}}_{p^{-1}\left( U_{_{i}}\right) }$ et $%
\widetilde{\mathcal{F}}_{p^{-1}\left( U_{_{i}}\right) }^{\bot }$ les
restrictions respectives de $\widetilde{\mathcal{F}}$ et $\widetilde{%
\mathcal{F}}^{\bot }$ \`{a} $p^{-1}\left( U_{_{i}}\right) .$

Pour $a\in K$ fix\'{e}, on a pour tout $k\in K$

\begin{equation*}
\sigma _{iak}\left( \widetilde{\mathcal{F}}_{\widetilde{U}%
_{_{i}}^{a}}\right) =\widetilde{\mathcal{F}}_{\widetilde{U}_{_{i}}^{k}}\text{
et }\sigma _{iak}\left( \widetilde{\mathcal{F}}_{\widetilde{U}%
_{_{i}}^{a}}^{\bot }\right) =\widetilde{\mathcal{F}}_{\widetilde{U}%
_{_{i}}^{k}}^{\bot }.
\end{equation*}%
D'o\`{u} l'\'{e}galit\'{e} $\overline{\sigma }_{iak}\circ \phi _{i}=\phi
_{i}\circ \sigma _{iak}$ montre que 
\begin{equation*}
\overline{\sigma }_{iak}\left( \phi _{i}\left( \widetilde{\mathcal{F}}_{%
\widetilde{U}_{_{i}}^{a}}\right) \right) =\phi _{i}\left( \widetilde{%
\mathcal{F}}_{\widetilde{U}_{_{i}}^{k}}\right) \text{ et }\overline{\sigma }%
_{iak}\left( \phi _{i}\left( \widetilde{\mathcal{F}}_{\widetilde{U}%
_{_{i}}^{a}}^{\bot }\right) \right) =\phi _{i}\left( \widetilde{\mathcal{F}}%
_{\widetilde{U}_{_{i}}^{k}}^{\bot }\right) .
\end{equation*}%
Par cons\'{e}quent les feuilletages $\phi _{i}\left( \widetilde{\mathcal{F}}%
_{p^{-1}\left( U_{_{i}}\right) }\right) $ et $\phi _{i}\left( \widetilde{%
\mathcal{F}}_{p^{-1}\left( U_{_{i}}\right) }^{\bot }\right) $ sont invariant
par l'action de $\pi _{_{1}}\left( M\right) $ et passe donc au quotient en
un feuilletage sur $L_{i}\times L_{i}^{\bot }.$

Comme d'apr\`{e}s le th\'{e}or\`{e}me 2.4 de d\'{e}composition de
Blumenthal-Hebda les feuilles du feuilletage simple $\phi _{i}\left( 
\widetilde{\mathcal{F}}_{\widetilde{U}_{_{i}}^{k}}\right) $ sont $\sigma
_{iak}^{1}\left( \widetilde{L}_{ia}\right) \times \left\{ p\right\} $ o\`{u} 
$\widetilde{p}\in \sigma _{iak}^{2}\left( \widetilde{L}_{ia}^{\bot }\right) $
et celles de $\phi _{i}\left( \widetilde{\mathcal{F}}_{\widetilde{U}%
_{_{i}}^{k}}^{\bot }\right) $ sont $\sigma _{iak}^{2}\left( \widetilde{L}%
_{ia}^{\bot }\right) \times \left\{ p\right\} $ o\`{u} $p\in \sigma
_{iak}^{1}\left( \widetilde{L}_{ia}\right) $ et comme 
\begin{equation*}
\overline{\sigma }_{iak}=\left( \sigma _{iak}^{1}\circ p_{ia}^{1},\sigma
_{iak}^{2}\circ p_{ia}^{2}\right) ,\text{ }
\end{equation*}
\begin{equation*}
\text{ }\frac{\underset{k\in K}{\cup }\sigma _{iak}^{1}\left( \widetilde{L}%
_{ia}\right) }{\pi _{_{1}}\left( M\right) }=L_{i}\text{\ et }\frac{\underset{%
k\in K}{\cup }\sigma _{iak}^{2}\left( \widetilde{L}_{ia}^{\bot }\right) }{%
\pi _{_{1}}\left( M\right) }=L_{i}^{\bot },
\end{equation*}%
alors le feuilletage quotient $\frac{\phi _{i}\left( \widetilde{\mathcal{F}}%
_{p^{-1}\left( U_{_{i}}\right) }\right) }{\pi _{_{1}}\left( M\right) }$ \`{a}
ses feuilles qui sont $L_{i}\times \left\{ x\right\} $ o\`{u} $x\in
L_{i}^{\bot }$ et $\frac{\phi _{i}\left( \widetilde{\mathcal{F}}_{\widetilde{%
U}_{_{i}}^{k}}^{\bot }\right) }{\pi _{_{1}}\left( M\right) }$ \`{a} ses
feuilles qui sont $L_{i}^{\bot }\times \left\{ x\right\} $ o\`{u} $x\in
L_{i}.$
\end{proof}

\begin{definition}
On dit qu'un feuilletage $\mathcal{F}$ est riemannien transversalement
diagonal relativement \`{a} une m\'{e}trique $g_{_{T}}$ si et seulement s'il
est d\'{e}fini par un cocycle feuillet\'{e} $(U_{_{i}},f_{_{i}},T,\gamma
_{_{ij}})_{i\in I}$ v\'{e}rifiant les conditions suivantes:

i) les ouverts $U_{_{i}}$ sont $\mathcal{F}$-distingu\'{e}s,

ii) la m\'{e}trique $g_{_{T}}$ est une m\'{e}trique sur la vari\'{e}t\'{e}
transverse $T$ et les $\gamma _{_{ij}}$ sont des isom\'{e}tries locales pour
cette m\'{e}trique,

\textit{iii)} sur chaque ouvert $f_{_{i}}\left( U_{_{i}}\right) $ il existe
un syst\`{e}me $\left( y_{1,}^{i}y_{2,...,}^{i}y_{q}^{i}\right) $ de coordonn%
\'{e}s $\mathcal{F}$-transverses locales tel que relativement aux syst\`{e}%
mes de coordonn\'{e}s $\mathcal{F}$-transverses locales $\left( \left(
y_{1,}^{i}y_{2,...,}^{i}y_{q}^{i}\right) \right) _{i\in I}$ sur les ouverts $%
f_{_{i}}\left( U_{_{i}}\right) $, la matrice jacobienne $J_{\gamma _{_{ij}}}$
de $\gamma _{_{ij}}$ est diagonale et orthogonale.

Les syst\`{e}mes $\left( \left( y_{1,}^{i}y_{2,...,}^{i}y_{q}^{i}\right)
\right) _{i\in I}$ de coordonn\'{e}s $\mathcal{F}$-transverses locales sur
les ouverts $f_{_{i}}\left( U_{_{i}}\right) $ seront appel\'{e}s syst\`{e}%
mes de coordonn\'{e}s transverses diagonaux de $\mathcal{F}$.
\end{definition}

On notera que ce qui suit:

\textit{\ }Si $(M_{1},\mathcal{F}_{1}),$ $(M_{2},\mathcal{F}_{2}),...,$ $%
(M_{k},\mathcal{F}_{k})$ sont $k$ feuilletages riemanniens de codimension un
alors le feuilletage produit $\left( M_{1}\times M_{2}\times ...\times M_{k},%
\text{ }\mathcal{F}_{1}\times \mathcal{F}_{2}\times ...\times \mathcal{F}%
_{k}\right) $ est un feuilletage riemannien transversalement diagonal de
codimension $k$ sur la vari\'{e}t\'{e} produit $M=M_{1}\times M_{2}\times
...\times M_{k}.$

Dans la preuve du r\'{e}sultat qui suit on donne une caract\'{e}risation
matricielle des drapeaux complets riemanniens d'extension d'un feuilletage
riemannien.

\begin{theorem}
Soit $\mathcal{D}_{_{\mathcal{F}_{q}\text{ }}}=\left( \mathcal{F}_{q-1},%
\text{ }\mathcal{F}_{q-2},\text{ }...,\text{ }\mathcal{F}_{1}\right) $ un
drapeau complet riemannien d'extension d'un feuilletage riemannien $\mathcal{%
F}_{q}$ de codimension $q$ sur une vari\'{e}t\'{e} compacte connexe $M$ tel
qu'il existe sur la vari\'{e}t\'{e} $M$ une m\'{e}trique $g$ quasi-fibr\'{e}
commune \`{a} tous les feuilletages riemanniens $\mathcal{F}_{k}$.

Alors chaque feuilletage riemanien $\mathcal{F}_{k}$ est transversalement
diagonal.
\end{theorem}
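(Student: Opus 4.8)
The plan is to feed the structural output of Th\'eor\`eme 3.4 into the local decomposition of Th\'eor\`eme 3.6 and then read off the transition cocycle. Fixing $k\in\{1,\ldots,q\}$, Th\'eor\`eme 3.4 furnishes the orthogonal transverse parallelism $(Y_{s})_{0\le s\le k-1}$, where each $Y_{s}$ is a unit section of $(T\mathcal{F}_{s+1})^{\perp}\cap T\mathcal{F}_{s}$, together with transverse integrability of $\mathcal{F}_{k}$. Since the metric $g$ is quasi-fibr\'ee for every foliation of the flag, each $Y_{s}$ is a foliated (basic) field for $\mathcal{F}_{k}$, so the holonomy pseudogroup of $\mathcal{F}_{k}$ preserves the parallelism: the local transition diffeomorphisms fix each $Y_{s}$ and, being induced by isometries of $g$, are isometries of the transverse metric $g_{T}$.

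First I would record the Lie-bracket structure inherited from the flag, exactly as in the proof of Th\'eor\`eme 2.5 of $\cite{DIA2}$: for $s<t$ one has $[Y_{s},Y_{t}]\in\mathrm{span}(Y_{s})$. The immediate consequence is that every partial span $\mathrm{span}(Y_{s_{1}},\ldots,Y_{s_{r}})$ is involutive; in particular each line field $\mathrm{span}(Y_{s})$ and each hyperplane field $E_{s_{0}}=\bigoplus_{s\ne s_{0}}\mathrm{span}(Y_{s})$ is integrable. Note also that for $s\le k-1$ one has $Y_{s}\perp T\mathcal{F}_{s+1}\supseteq T\mathcal{F}_{k}$, so all the $Y_{s}$ are sections of $(T\mathcal{F}_{k})^{\perp}$.

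Next, applying Th\'eor\`eme 3.6 to $\mathcal{F}_{k}$ I obtain a cover $(U_{i})_{i\in I}$ with $U_{i}\cong L_{i}\times L_{i}^{\perp}$, the transverse leaf $L_{i}^{\perp}$ carrying the orthonormal frame $(Y_{s})$ (tangent to $L_{i}^{\perp}$ by the previous remark). Using that the $E_{s_{0}}$ are integrable hyperplane fields in general position, I produce local coordinates $(y_{1}^{i},\ldots,y_{k}^{i})$ on $f_{i}(U_{i})\cong L_{i}^{\perp}$ with $E_{s_{0}}=\ker dy_{s_{0}}^{i}$, so that $Y_{s}=\lambda_{s}\,\partial/\partial y_{s}^{i}$ for nonvanishing functions $\lambda_{s}$; orthonormality of the frame then forces the transverse metric to be diagonal, $g_{T}=\sum_{s}\lambda_{s}^{-2}(dy_{s}^{i})^{2}$. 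Because the transitions $\gamma_{ij}$ fix each line field $\mathrm{span}(Y_{s})$, they preserve each coordinate foliation $\{y_{s_{0}}^{i}=\mathrm{const}\}$, whence $y_{s_{0}}^{j}\circ\gamma_{ij}$ depends only on $y_{s_{0}}^{i}$ and $J_{\gamma_{ij}}$ is diagonal, which already gives transverse diagonality in the sense of D\'efinition 3.2.

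Finally, to upgrade ``diagonale'' to ``diagonale et orthogonale'' in the sense of D\'efinition 3.5, I would normalise the coordinates by transverse arc length along each line field. Since $\gamma_{ij}$ is an isometry carrying the unit field $Y_{s}$ to $Y_{s}$, it acts on each one-dimensional leaf space as an orientation-preserving-or-reversing isometry, i.e. by $y_{s}^{j}=\pm y_{s}^{i}+c_{s}$; hence each diagonal entry of $J_{\gamma_{ij}}$ equals $\pm1$ and the Jacobian is orthogonal. Assembling the cocycle $(U_{i},f_{i},T,\gamma_{ij})_{i\in I}$ then exhibits $\mathcal{F}_{k}$ as transversalement diagonal. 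I expect the main obstacle to be precisely this last normalisation: turning the pointwise ``unit and flag-preserving'' data into a genuine arc-length coordinate system in which the isometries are signed translations. This is where the common quasi-fibr\'ee hypothesis must be used decisively---through the iterated Blumenthal--Hebda splitting of Th\'eor\`eme 2.4 and its local form Th\'eor\`eme 3.6---to guarantee that the transverse line fields are totally geodesic and that $g_{T}$ locally splits into one-dimensional factors, thereby reconciling the structure functions $c_{st}$ with the existence of orthonormal flag-adapted coordinates.
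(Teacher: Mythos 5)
Your derivation of \emph{diagonality} --- the conclusion as literally stated, i.e. D\'efinition 3.2 --- is correct, and it follows a genuinely different route from the paper's. The paper iterates the local Blumenthal--Hebda splitting (Th\'eor\`eme 3.5, cited as ``th\'eor\`eme 4.3'' inside its proof) to identify each $U_i$ with a product $L_q^i\times H_{Y_{q-1}}^i\times\cdots\times H_{Y_0}^i$, takes as transverse coordinates the flows of the lifted fields $\left(Y_s^i\right)^{h_{q-s+1}}$ (which commute by the Nasri--Djaa product lemma; this is the paper's substitute for your web coordinates), deduces from the liaison identities $\gamma_{ij}^{k-1}\circ\theta_j^{k-1}=\theta_i^{k-1}\circ\gamma_{ij}^{k}$ only that $J_{\gamma_{ij}^{k}}$ is \emph{upper triangular}, and then needs orthogonality of $J_{\gamma_{ij}^{k}}$ to kill the entries above the diagonal. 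You get diagonality in one stroke: the transitions preserve each projected line field $\mathbb{R}Y_s$ because $(Y_s)$ is a transverse parallelism, and the web coordinates built from the integrable complementary hyperplane fields $E_{s_0}$ convert this invariance into the vanishing of all off-diagonal entries, with no appeal to the metric beyond Th\'eor\`eme 3.4 itself. (Minor slip: with the paper's indexing the bracket relation reads $[Y_s,Y_t]\in\mathrm{span}(Y_t)$ for $s<t$, proportional to the \emph{deeper} field, not $\mathrm{span}(Y_s)$; involutivity of the $E_{s_0}$ is unaffected, since the bracket always lies in the span of one of the two fields.)

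The gap is exactly where you feared, and it cannot be closed: the upgrade from diagonal to diagonal-and-orthogonal. Your normalisation presupposes that arc length along $Y_s$ descends to the local leaf space of $E_s$, i.e. that the $Y_s$-flow time between two leaves of $E_s$ is independent of the starting point on the leaf; this is precisely what fails when the structure functions of the flag do not vanish. Since $[Y_t,Y_s]=c\,Y_{\max(s,t)}$, a nonzero $c$ forbids any coordinates in which $Y_s=\partial/\partial y_s$ for all $s$ simultaneously, and in your coordinates one computes $\partial\lambda_s/\partial y_t=\pm\,c\,\lambda_s/\lambda_t$ for $t<s$: the function $\lambda_s$ genuinely depends on variables other than $y_s$, so no reparametrisation $y_s\mapsto\psi_s(y_s)$ --- the only changes compatible with diagonality --- can make it $\equiv 1$.

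Moreover the obstruction is not technical: the orthogonal conclusion (D\'efinition 3.6, the version announced in the abstract) is \emph{false} under the stated hypotheses. Take $M=T_A^3$ the mapping torus of a hyperbolic $A\in SL(2,\mathbb{Z})$ with eigenvalue $\lambda>1$, with the Sol metric $g=\lambda^{-2t}du^2+\lambda^{2t}dv^2+dt^2$ in eigencoordinates, $\mathcal{F}_2$ the one-dimensional foliation tangent to $\partial_u$ and $\mathcal{F}_1$ the fibration $\left\{t=\mathrm{const}\right\}$. All hypotheses hold ($g$ is bundle-like for both), the parallelism is $Y_1=\lambda^{-t}\partial_v$, $Y_0=\partial_t$ with $[Y_0,Y_1]=-(\log\lambda)Y_1\neq0$, and the holonomy of $\mathcal{F}_2$ on the transversal $(v,t)$ is generated by $(v,t)\mapsto(v+c,t)$ ($c$ in a dense subgroup) and $(v,t)\mapsto(\lambda^{-1}v,t+1)$: diagonal Jacobians, so D\'efinition 3.2 holds. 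But a cocycle with diagonal \emph{orthogonal} Jacobians would glue the flat metrics $\sum_s\left(dy_s^i\right)^2$ into a holonomy-invariant flat transverse metric, whereas every invariant transverse metric here has the form $E(t)dv^2+2F(t)dv\,dt+G(t)dt^2$ with $E(t+1)=\lambda^2E(t)$, $F(t+1)=\lambda F(t)$, $G(t+1)=G(t)$, and none of these is flat: after a change $w=v+\phi(t)$ making it diagonal and an arc-length reparametrisation $s$ of $t$, it reads $f(s)^2dw^2+ds^2$ with $f(s+\ell)=\lambda f(s)$, while flatness would force $f$ affine, which is incompatible with that relation. So only the weak form survives --- and that part your proposal does prove. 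Note finally that the paper's own proof founders on this same step: it asserts that the product frames $\left(\partial/\partial y_s^i\right)$ are $g$-orthonormal, but the lifted field $\left(Y_s^i\right)^{h_{q-s+1}}$ is unit only on the slice through the base point (in the example above, $\partial/\partial y_1^i=\lambda^{-t_i}\partial_v$ has $g$-norm $\lambda^{t-t_i}$), so its orthogonality claim --- on which even its passage from triangular to diagonal depends --- is unjustified; for the statement as written, your argument is the more solid of the two.
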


\begin{proof}
Soit $p:\widetilde{M}\rightarrow M\ $le rev\^{e}tement universel $M,$\ $%
\widetilde{\mathcal{F}}_{k}$ le relev\'{e} de $\mathcal{F}_{k}$ sur $%
\widetilde{M}$.

Consid\'{e}rons $(U_{_{i}})_{i\in I}$ un recouvrement d'ouverts de la vari%
\'{e}t\'{e} $M$ par des ouverts de trivialisation local du rev\^{e}tement
universel $\widetilde{M}$ de$\ M$ qui sont distingu\'{e}s pour chaque
feuilletage $\mathcal{F}_{k}$ et soit $\mathcal{F}_{k/U_{_{i}}}$ et $%
\mathcal{F}_{k/U_{_{i}}}^{\bot }$ les restrictions respectives de $\mathcal{F%
}_{k}$ et $\mathcal{F}_{k}^{\bot }$ \`{a} $U_{_{i}}.$

D'apr\`{e}s le th\'{e}or\`{e}me 4.3 chaque ouvert $U_{_{i}}$ est diff\'{e}%
omorphe ($\simeq $) au produit $L_{q}^{i}\times L_{q}^{i\bot }$ o\`{u} $%
L_{q}^{i}$ est une feuille de $\mathcal{F}_{q/U_{_{i}}}$ et $L_{q}^{i\bot }$
est une feuille de $\mathcal{F}_{q/U_{_{i}}}^{\bot }.$

Soit $\widetilde{L}_{q}^{\bot }$ une feuille de $\widetilde{\mathcal{F}}_{q}$
au "dessus" de $U_{_{i}}$ qui se projette sur $U_{_{i}}$ suivant $%
L_{q}^{i\bot }.$

On montre dans $\cite{DIA2}$ que le drapeau relev\'{e} $\mathcal{D}_{_{%
\widetilde{\mathcal{F}}_{q}\text{ }}}=\left( \widetilde{\mathcal{F}}_{q-1},%
\widetilde{\mathcal{F}}_{q-2},\text{ }...,\text{ }\widetilde{\mathcal{F}}%
_{1}\right) $ de $\mathcal{D}_{_{\mathcal{F}_{q}\text{ }}}$ sur $\widetilde{M%
}$\ se proj\`{e}te sur $\widetilde{L}_{q}^{\bot }$ en un drapeau complet
simple 
\begin{equation*}
\mathcal{D}_{_{\widetilde{\mathcal{F}}_{q-1}\text{ }}}^{\widetilde{L}%
_{q}^{\bot }}=\left( \widetilde{\mathcal{F}}_{q-2}^{\widetilde{L}_{q}^{\bot
}},\widetilde{\mathcal{F}}_{q-3}^{\widetilde{L}_{q}^{\bot }},\text{ }...,%
\text{ }\widetilde{\mathcal{F}}_{1}^{\widetilde{L}_{q}^{\bot }}\right)
\end{equation*}
d'extension riemannienne du feuilletage $\widetilde{\mathcal{F}}_{q-1}^{%
\widetilde{L}_{q}^{\bot }}$ o\`{u} $\widetilde{\mathcal{F}}_{k}^{\widetilde{L%
}_{q}^{\bot }}$ d\'{e}signe le feuilletage projet\'{e} sur $\widetilde{L}%
_{q}^{\bot }$ du feuilletage relev\'{e} $\widetilde{\mathcal{F}}_{k}$ de $%
\mathcal{F}_{k}$ sur $\widetilde{M}.$ On v\'{e}rifie aisement que $%
\widetilde{\mathcal{F}}_{q-1}^{\widetilde{L}_{q}^{\bot }}$ est un flot
riemannien.

Ainsi comme $\widetilde{L}_{q}^{\bot }$ est une variet\'{e} compl\`{e}te
connexe et simplement connexe le th\'{e}or\`{e}me 2.4 de d\'{e}composition
de Blumenthal-Hebda $\cite{BH}$ assure qu'il existe un diff\'{e}omorphe $%
\phi ^{2}:\widetilde{L}_{q}^{\bot }$ $\rightarrow $ $\widetilde{H}%
_{Y_{q-1}}\times \widetilde{H}_{Y_{q-1}}^{\bot }$ o\`{u} $\widetilde{H}%
_{Y_{q-1}}$ est une feuille du flot riemannien $\widetilde{\mathcal{F}}%
_{q-1}^{\widetilde{L}_{q}^{\bot }}$ et $\widetilde{H}_{Y_{q-1}}^{\bot }$ est
une feuille du feuilletage $\left( \widetilde{\mathcal{F}}_{q-1}^{\widetilde{%
L}_{q}^{\bot }}\right) ^{\bot }.$

En appliquant le th\'{e}or\`{e}me 4.3 on obtient qu'il existe un diff\'{e}%
omorphisme $\phi _{i}^{2}:p^{-1}\left( L_{q}^{i\bot }\right) \rightarrow
\phi ^{2}\left( p^{-1}\left( L_{q}^{i\bot }\right) \right) $ qui passe au
quotient par l'action de $\pi _{_{1}}\left( \widetilde{L}_{q}^{\bot }\right)
.$ (\textit{cf.preuve th\'{e}o.4.3}) en un diff\'{e}omorphisme $\overline{%
\phi }_{i}^{2}:L_{q}^{i\bot }\rightarrow H_{Y_{q-1}}^{^{i}}\times
H_{Y_{q-1}}^{^{i\bot }}$ o\`{u} $H_{Y_{q-1}}^{^{i}}$ est une feuille du flot 
$\mathcal{F}_{Y_{q-1}}=p\left( \widetilde{\mathcal{F}}_{q-1}^{\widetilde{L}%
_{q}^{\bot }}\right) $ et $H_{Y_{q-1}}^{^{i\bot }}$ est une feuille de $%
p\left( \widetilde{\mathcal{F}}_{q-1}^{\widetilde{L}_{q}^{\bot }}\right)
^{\bot }.$

On obtient ainsi que $U_{_{i}}$ est diff\'{e}omorphe au produit $%
L_{q}^{i}\times H_{Y_{q-1}}^{^{i}}\times H_{Y_{q-1}}^{^{i\bot }}.$

De proche en proche on obtient que chaque ouvert $U_{_{i}}$ est diff\'{e}%
omorphe au produit$\ L_{q}^{^{i}}\times H_{Y_{q-1}}^{^{i}}\times ...\times
H_{Y_{0}}^{^{i}}$ o\`{u} $L_{q}^{^{i}}\ $est une feuille de $\mathcal{F}%
_{q/U_{_{i}}}$, $H_{Y_{k}}^{^{i}}\ $est une feuille de la restriction \ $%
\mathcal{F}_{Y_{k}/U_{_{i}}}$ du flot $\mathcal{F}_{Y_{k}}$ \`{a} $U_{_{i}}.$

On v\'{e}rifie aisement en appliquant le th\'{e}or\`{e}me 4.3 que si on d%
\'{e}signe par $\mathcal{F}_{k/U_{_{i}}}$ la restriction \`{a} $U_{_{i}}$ de 
$\mathcal{F}_{k}$ alors chaque feuille de $\mathcal{F}_{k/U_{_{i}}}$ est de
la forme $L_{k}^{^{i}}\times \left\{ p\right\} $ avec $%
L_{k}^{^{i}}=L_{q}^{^{i}}\times H_{Y_{q-1}}^{^{i}}\times
H_{Y_{q-2}}^{^{i}}\times ...\times H_{Y_{k}}^{^{i}}$ et $\ p\in
H_{k}^{^{i}}=H_{Y_{k-1}}^{^{i}}\times H_{Y_{k-2}}^{^{i}}\times ...\times
H_{Y_{0}}^{^{i}}.$

On v\'{e}rifie aussi aisement que sur chaque feuille $H_{Y_{k}}^{^{i}}$ le
champ de vecteurs unitaire $Y_{k}$ induit un champ de vecteurs $Y_{k}^{i}$
et les champs de vecteurs $Y_{k}^{i}$ sont orthogonaux deux \`{a} deux pour
la m\'{e}trique $g.$

Consid\'{e}rons maintenant $\left( a_{q},a_{q-1},...,a_{0}\right) \in
L_{q}^{^{i}}\times H_{Y_{q-1}}^{^{i}}\times ...\times H_{Y_{0}}^{^{i}}.$

On pose 
\begin{equation*}
\begin{array}{cccc}
R_{h_{k}}: & T_{a_{k}}H_{Y_{k}}^{^{i}} & \rightarrow & T_{a_{q}}L_{q}^{^{i}}%
\times T_{a_{q-1}}H_{Y_{q-1}}^{^{i}}\times ...\times
T_{a_{0}}H_{Y_{0}}^{^{i}} \\ 
& X_{a_{k}}^{i} & \mapsto & \left( X_{a_{k}}^{i}\right) ^{h_{k}}=\left(
0,0,...,0,X_{a_{k}}^{i},0,...,0\right)%
\end{array}%
\end{equation*}%
o\`{u} $X_{a_{k}}^{i}$ est \`{a} la $\left( q-k+1\right) ^{i\grave{e}me}$
position. On remarquera que $q-k+1$ est la position de $TH_{Y_{k}}^{^{i}}$
dans le produit cartesien $TL_{q}^{^{i}}\times TH_{Y_{q-1}}^{^{i}}\times
...\times TH_{Y_{0}}^{^{i}}.$

Soit $\mathcal{X}\left( H_{Y_{k}}^{^{i}}\right) $ l'alg\`{e}bre de lie des
champs de vecteurs tangents \`{a} $H_{Y_{k}}^{^{i}}$ et $X_{k}^{i}\in 
\mathcal{X}\left( H_{Y_{k}}^{^{i}}\right) .$

Les sections $\left( \left( X_{k}^{i}\right) _{a_{k}}\right) ^{h_{k}}$ de $%
T\left( L_{q}^{^{i}}\times H_{Y_{q-1}}^{^{i}}\times ...\times
H_{Y_{0}}^{^{i}}\right) $\ o\`{u}$\ a_{k}\in H_{Y_{k}}^{^{i}}$ d\'{e}%
finissent un champ de vecteurs sur $L_{q}^{^{i}}\times
H_{Y_{q-1}}^{^{i}}\times ...\times H_{Y_{0}}^{^{i}}$ qu'on notera $\left(
X_{k}^{i}\right) ^{h_{k}}.$

On montre dans $\cite{RM}$ que pour $X_{k}^{i}\in TH_{Y_{k}}^{^{i}}$ et $%
X_{s}^{i}\in TH_{Y_{s}}^{^{i}}$ on a 
\begin{equation*}
\left[ \left( X_{k}^{i}\right) ^{h_{q-k+1}},\left( X_{s}^{i}\right)
^{h_{q-s+1}}\right] =0\text{ pour }k\neq s.
\end{equation*}%
Il r\'{e}sulte de cela, $Y_{k}^{i}$ \'{e}tant le champ induit par $Y_{k}$
sur la feuille $H_{Y_{k}}^{^{i}},$ que 
\begin{equation*}
\left[ \left( Y_{k}^{i}\right) ^{h_{q-k+1}},\left( Y_{s}^{i}\right)
^{h_{q-s+1}}\right] =0\text{ pour }k\neq s.
\end{equation*}

Ainsi, l'ouvert $U_{_{i}}$ \'{e}tant un produit des vari\'{e}t\'{e}s $%
L_{q}^{^{i}}$, $H_{Y_{q-1}}^{^{i}}$, $H_{Y_{q-2}}^{^{i}},...,$ et $%
H_{Y_{0}}^{^{i}},$ on a les champs de vecteurs $\left( Y_{s}^{i}\right)
^{h_{q-s+1}}$ sur $U_{_{i}}$ qui d\'{e}finissent un syst\`{e}me de coordonn%
\'{e}s $\mathcal{F}_{q}-$transverse $\left( y_{q-1}^{i},...,y_{0}^{i}\right) 
$ sur $U_{_{i}}.$

Dans la suite $\left( Y_{s}^{i}\right) ^{h_{q-s+1}}$ se notera $\frac{%
\partial }{\partial y_{s}^{i}}.$

Le champ de rep\`{e}res orthonorm\'{e}s $\mathcal{F}_{q}$-transverse local $%
\left( \frac{\partial }{\partial y_{q-1}^{i}},\frac{\partial }{\partial
y_{q-2}^{i}},...,\frac{\partial }{\partial y_{0}^{i}}\right) $ sur $U_{_{i}}$
sera dit compatible avec le drapeau riemannien $\mathcal{D}_{_{\mathcal{F}%
_{q}\text{ }}}.$ On dira aussi que le syst\`{e}me de coordonn\'{e}s $%
\mathcal{F}_{q}-$transverse $\left( y_{q-1}^{i},...,y_{0}^{i}\right) $ est 
\'{e}galement compatible avec $\mathcal{D}_{_{\mathcal{F}_{q}\text{ }}}.$

Soit $(U_{_{i}},f_{_{i}}^{k},T^{k},\gamma _{_{ij}}^{k})_{i\in I}$ un cocyle
feuillet\'{e} d\'{e}finissant le feuilletage riemannien $\mathcal{F}_{k}$\
sur l'ouvert distingu\'{e}s $U_{_{i}}$.

On d\'{e}signe par $\theta _{i}^{k-1}$ la liaison riemannienne entre les
feuilletages riemanniens $\left( U_{_{i}},\mathcal{F}_{k}\right) $ et $%
\left( U_{_{i}},\mathcal{F}_{k-1}\right) .$

Comme chaque feuille de $\mathcal{F}_{k/U_{_{i}}}$ est de la forme $%
L_{k}^{^{i}}\times \left\{ p\right\} $ avec 
\begin{equation*}
L_{k}^{^{i}}=L_{q}^{^{i}}\times H_{Y_{q-1}}^{^{i}}\times
H_{Y_{q-2}}^{^{i}}\times ...\times H_{Y_{k}}^{^{i}}
\end{equation*}%
et $\ p\in H_{k}^{^{i}}=H_{Y_{k-1}}^{^{i}}\times H_{Y_{k-2}}^{^{i}}\times
...\times H_{Y_{0}}^{^{i}}$ alors $\theta _{i}^{k-1}$ est la projection de $%
H_{k}^{^{i}}$ sur $H_{k-1}^{^{i}}$ suivant $H_{Y_{k-1}}^{^{i}}.$

Il en r\'{e}sulte que 
\begin{equation*}
\left( \theta _{i}^{k-1}\right) _{\ast }\left( \frac{\partial }{\partial
y_{k-1}^{i}}\right) =0.
\end{equation*}

On signale avant de continuer que $H_{Y_{k-1}}^{^{i}}\times
H_{Y_{k-2}}^{^{i}}\times ...\times H_{Y_{0}}^{^{i}}$ est diff\'{e}omorphe 
\`{a} une vari\'{e}t\'{e} int\'{e}grable de $\left( T\mathcal{F}%
_{k/U_{_{i}}}\right) ^{\perp }.$ On peut de ce fait identifier $%
f_{i}^{k}(U_{i})\ $et\ $H_{Y_{k-1}}^{^{j}}\times H_{Y_{k-2}}^{^{j}}\times
...\times H_{Y_{0}}^{^{j}}$.

Sous l'identification $f_{i}^{k}(U_{i})\simeq H_{Y_{k-1}}^{^{j}}\times
H_{Y_{k-2}}^{^{j}}\times ...\times H_{Y_{0}}^{^{j}}$on peut consid\'{e}rer
le syst\`{e}me de coordonn\'{e}s $\mathcal{F}_{k}-$transverse $\left(
y_{k-1}^{i},...,y_{0}^{i}\right) $ sur $U_{_{i}}$ comme \'{e}tant un syst%
\`{e}me de coordonn\'{e}s sur $f_{i}^{k}(U_{i})$ qu'on notera encore $\left(
y_{k-1}^{i},...,y_{0}^{i}\right) .$

L'\'{e}galit\'{e} $\left( \theta _{i}^{k-1}\right) _{\ast }\left( \frac{%
\partial }{\partial y_{k-1}^{i}}\right) =0$ montre que 
\begin{equation*}
\theta _{i}^{k-1}\left( y_{k-1}^{i},y_{k-2}^{i},...,y_{0}^{i}\right) =\left(
y_{k-2}^{i},...,y_{0}^{i}\right) .
\end{equation*}

Pour $U_{i}\cap U_{j}\neq \emptyset ,$ soit $\left(
y_{k-1}^{j},y_{k-2}^{j},...,y_{0}^{j}\right) \in f_{j}^{k}(U_{i}\cap U_{j})$
et 
\begin{equation*}
\gamma _{_{ij}}^{k}\left( y_{k-1}^{j},y_{k-2}^{j},...,y_{0}^{j}\right)
=\left( \gamma _{k-1}^{i},\gamma _{k-2}^{i},...,\gamma _{0}^{i}\right) .
\end{equation*}

On a $\cite{DAD}$ les \'{e}galit\'{e}s, 
\begin{equation*}
f_{i}^{k-1}=\theta _{i}^{k-1}\circ f_{i}^{k}\text{ \ et }\gamma
_{_{ij}}^{k-1}\circ \theta _{j}^{k-1}=\theta _{i}^{k-1}\circ \gamma
_{_{ij}}^{k}
\end{equation*}%
c'est \`{a} dire que le diagrame suivant est commutatif

\begin{equation*}
\begin{array}{ccccc}
U_{i}\cap U_{j} & \overset{f_{_{j}}^{k}}{\rightarrow } & f_{j}^{k}(U_{i}\cap
U_{j}) & \overset{\theta _{j}^{k-1}}{\rightarrow } & f_{j}^{k-1}(U_{i}\cap
U_{j}) \\ 
^{Id_{U_{i}\cap U_{j}}}\downarrow &  & \downarrow \gamma _{_{ij}}^{k} &  & 
\downarrow \gamma _{_{ij}}^{k-1} \\ 
U_{i}\cap U_{j} & \overset{f_{i}^{k}}{\rightarrow } & f_{i}^{k}(U_{i}\cap
U_{j}) & \overset{\theta _{i}^{k-1}}{\rightarrow } & f_{i}^{k-1}(U_{i}\cap
U_{j})%
\end{array}%
.
\end{equation*}%
D'o\`{u} l'\'{e}galit\'{e} 
\begin{equation*}
\theta _{i}^{k-1}\left( y_{k-1}^{i},y_{k-2}^{i},...,y_{0}^{i}\right) =\left(
y_{k-2}^{i},...,y_{0}^{i}\right)
\end{equation*}
entraine :%
\begin{eqnarray*}
\ \gamma _{_{ij}}^{k-1}\left( y_{k-2}^{j},...,y_{0}^{j}\right) &=&\gamma
_{_{ij}}^{k-1}\circ \theta _{j}^{k-1}\left(
y_{k-1}^{j},y_{k-2}^{j},...,y_{0}^{j}\right) \\
&=&\theta _{i}^{k-1}\circ \gamma _{_{ij}}^{k}\left(
y_{k-1}^{j},y_{k-2}^{j},...,y_{0}^{j}\right) \\
&=&\theta _{i}^{k-1}\left( \gamma _{k-1}^{i},\gamma _{k-2}^{i},...,\gamma
_{0}^{i}\right) \\
&=&\left( \gamma _{k-2}^{i},...,\gamma _{0}^{i}\right)
\end{eqnarray*}

Ainsi 
\begin{equation*}
\gamma _{_{ij}}^{k}\left( y_{k-1}^{j},y_{k-2}^{j},...,y_{0}^{j}\right)
=\left( \gamma _{k-1}^{i},\gamma _{k-2}^{i},...,\gamma _{0}^{i}\right)
\end{equation*}%
avec%
\begin{equation*}
\left( \gamma _{k-s-1}^{i},\gamma _{k-s-2}^{i},...,\gamma _{0}^{i}\right)
=\gamma _{_{ij}}^{k-s}\left( y_{k-s-1}^{j},...,y_{0}^{j}\right)
\end{equation*}%
pour $s\in \left\{ 1,...,k-1\right\} .$

Il r\'{e}sulte de ce qui pr\'{e}c\`{e}de que 
\begin{equation*}
J_{\gamma _{_{ij}}^{k}}=\left( 
\begin{array}{cccccc}
\frac{\partial \gamma _{k-1}^{i}}{\partial y_{k-1}^{j}} & \frac{\partial
\gamma _{k-1}^{i}}{\partial y_{k-2}^{j}} & ... & ... & ... & \frac{\partial
\gamma _{_{k-1}}^{i}}{\partial y_{0}^{j}} \\ 
0 & \frac{\partial \gamma _{k-2}^{i}}{\partial y_{k-2}^{j}} & \frac{\partial
\gamma _{k-2}^{i}}{\partial y_{k-3}^{j}} & ... & ... & \frac{\partial \gamma
_{_{k-2}}^{i}}{\partial y_{0}^{j}} \\ 
0 & 0 & \frac{\partial \gamma _{k-3}^{i}}{\partial y_{k-3}^{j}} & ... & ...
& \frac{\partial \gamma _{_{k-3}}^{i}}{\partial y_{0}^{j}} \\ 
: & : & 0 & . &  & : \\ 
: & : & : & . & . & : \\ 
0 & 0 & 0 & .... & 0 & \frac{\partial \gamma _{0}^{i}}{\partial y_{0}^{j}}%
\end{array}%
\right) .
\end{equation*}%
Les $\gamma _{_{ij}}^{k}$ sont des isom\'{e}tries pour la m\'{e}trique
quasi-fibr\'{e}e commune $g.$ Par cons\'{e}quent, comme $\left( \frac{%
\partial }{\partial y_{k-1}^{j}},\frac{\partial }{\partial y_{k-2}^{j}},...,%
\frac{\partial }{\partial y_{0}^{j}}\right) $ et $\left( \frac{\partial }{%
\partial y_{k-1}^{i}},\frac{\partial }{\partial y_{k-2}^{i}},...,\frac{%
\partial }{\partial y_{0}^{i}}\right) $ sont des bases orthonorm\'{e}es pour
la m\'{e}trique $g$ alors $J_{\gamma _{_{ij}}^{k}}$ est une matrice
orthogonale. Ce qui entraine que 
\begin{equation*}
J_{\gamma _{_{ij}}^{k}}\circ J_{\gamma _{_{ij}}^{k}}^{t}=I_{k}
\end{equation*}%
o\`{u} $J_{\gamma _{_{ij}}^{k}}^{t}$ est la transpos\'{e}e de $J_{\gamma
_{_{ij}}^{k}}^{t}.$

En utilisant l'\'{e}galit\'{e} 
\begin{equation*}
J_{\gamma _{_{ij}}^{k}}\circ J_{\gamma _{_{ij}}^{k}}^{t}=I_{k},
\end{equation*}
une r\'{e}currence sur $k$ permet de voir que $J_{\gamma
_{_{ij}}^{k}}=\left( \varepsilon _{ijrs}^{k}\right) _{rs}$ avec 
\begin{equation*}
\varepsilon _{ijrs}^{k}=\{%
\begin{array}{c}
\pm 1\text{ si }r=s \\ 
0\text{ si }r\neq s%
\end{array}%
.
\end{equation*}%
Ainsi, chaque feuilletage riemanien $\mathcal{F}_{k}$ est transversalement
diagonal.
\end{proof}

\section{Extension de feuilletages riemanniens minimals sur une vari\'{e}t%
\'{e} compacte connexe}

Le r\'{e}sultat qui suit est une g\'{e}n\'{e}ralisation partielle du th\'{e}%
or\`{e}me 2.6 se trouvant dans ($\cite{DAD},\cite{DD}).$

\begin{theorem}
Soit $\mathcal{G}$ l'alg\`{e}bre de Lie structurale d'un feuilletage
riemannien minimal $\left( M,\mathcal{F},g_{_{T}}\right) $ sur une vari\'{e}t%
\'{e} compacte $M$ et $g$ une m\'{e}trique $\mathcal{F}$-quasifibr\'{e}e sur 
$M$ de m\'{e}trique $\mathcal{F-}$transverse $g_{_{T}}.$

Alors:

i) Il existe une sous-alg\`{e}bre de Lie $\mathcal{G}_{_{\mathcal{F}}}$ de $%
\mathcal{G}$ telle qu'\`{a} toute extension $\mathcal{F}^{\prime }$ de $%
\mathcal{F}$ correspond (de fa\c{c}on unique) une sous-alg\`{e}bre de Lie $%
\mathcal{G}^{\prime }$ telle que $\mathcal{G}_{_{\mathcal{F}}}\subset 
\mathcal{G}^{\prime }$.

ii) Toute extension $\mathcal{F}^{\prime }$ de $\mathcal{F}$ est un
feuilletage riemannien et $g$ est une m\'{e}trique $\mathcal{F}^{\prime }$%
-quasifibr\'{e}e.
\end{theorem}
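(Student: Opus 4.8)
Pour \'{e}tablir ce r\'{e}sultat, l'id\'{e}e directrice est de se ramener au cas des feuilletages de Lie, trait\'{e} par le th\'{e}or\`{e}me 2.6, gr\^{a}ce \`{a} la th\'{e}orie de structure de Molino (th\'{e}or\`{e}me 2.1). Je rel\`{e}verais d'abord $\mathcal{F}$ en le feuilletage $\mathcal{F}^{\natural }$ sur le fibr\'{e} $\pi :M^{\natural }\rightarrow M$ des rep\`{e}res orthonorm\'{e}s $\mathcal{F}$-transverses, feuilletage qui est transversalement parall\'{e}lisable. En fixant une adh\'{e}rence de feuille $\widehat{W}=\overline{F^{\natural }}$, le th\'{e}or\`{e}me 2.1 fournit une fibration principale $\phi =\pi \circ j:\widehat{W}\rightarrow M$; comme $\mathcal{F}$ est minimal, $\phi \left( \widehat{W}\right) =\overline{\pi \left( F^{\natural }\right) }=M$, et la restriction $\mathcal{F}^{\natural }|_{\widehat{W}}$ est un $\mathcal{G}$-feuilletage de Lie minimal sur la vari\'{e}t\'{e} compacte connexe $\widehat{W}$, d'alg\`{e}bre structurale $\mathcal{G}$. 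Par construction $\phi$ envoie les feuilles de $\mathcal{F}^{\natural }|_{\widehat{W}}$ sur les feuilles de $\mathcal{F}$ (th\'{e}or\`{e}me 2.1 i)).

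Pour le point i), j'associerais \`{a} toute extension $\mathcal{F}^{\prime }$ de $\mathcal{F}$ le feuilletage image r\'{e}ciproque $\phi ^{-1}(\mathcal{F}^{\prime })$, dont les feuilles sont les $\phi ^{-1}(L^{\prime })$, $L^{\prime }$ feuille de $\mathcal{F}^{\prime }$. Puisque les feuilles de $\mathcal{F}^{\prime }$ sont satur\'{e}es pour $\mathcal{F}$ et que chaque feuille de $\mathcal{F}^{\natural }|_{\widehat{W}}$ se projette sur une feuille de $\mathcal{F}$, ce feuilletage $\phi ^{-1}(\mathcal{F}^{\prime })$ est une extension de $\mathcal{F}^{\natural }|_{\widehat{W}}$ contenant l'extension $\phi ^{-1}(\mathcal{F})$. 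Je poserais alors $\mathcal{G}_{\mathcal{F}}$ \'{e}gale \`{a} la sous-alg\`{e}bre de Lie que le th\'{e}or\`{e}me 2.6 1) associe \`{a} $\phi ^{-1}(\mathcal{F})$ (elle code les directions des fibres de $\phi$). La correspondance biunivoque du th\'{e}or\`{e}me 2.6 1) \'{e}tant croissante, les extensions de $\mathcal{F}^{\natural }|_{\widehat{W}}$ contenant $\phi ^{-1}(\mathcal{F})$ correspondent exactement aux sous-alg\`{e}bres $\mathcal{G}^{\prime }\supset \mathcal{G}_{\mathcal{F}}$. Comme $\mathcal{F}^{\prime }$ se reconstruit \`{a} partir de $\phi ^{-1}(\mathcal{F}^{\prime })$ par passage au feuilletage image sur $M$, l'application $\mathcal{F}^{\prime }\mapsto \mathcal{G}^{\prime }$ est bien d\'{e}finie, unique, et v\'{e}rifie $\mathcal{G}_{\mathcal{F}}\subset \mathcal{G}^{\prime }$: c'est le point i).

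Pour le point ii), le th\'{e}or\`{e}me 2.6 2) assure que $\phi ^{-1}(\mathcal{F}^{\prime })$ est un $\mathcal{G}/\mathcal{G}^{\prime }$-feuilletage transversalement riemannien sur $\widehat{W}$; il reste \`{a} faire descendre cette structure sur $M$ et \`{a} la confronter \`{a} $g$. Le fait crucial, cons\'{e}quence de ce que $\mathcal{F}$ est riemannien et que $g$ est $\mathcal{F}$-quasifibr\'{e}e de m\'{e}trique transverse $g_{T}$, est que $\mathcal{G}$ agit sur $\left( T,g_{T}\right) $ par isom\'{e}tries transverses infinit\'{e}simales. J'utiliserais cette propri\'{e}t\'{e} pour munir la vari\'{e}t\'{e} transverse mod\`{e}le $T^{\prime }$ de $\mathcal{F}^{\prime }$ de la m\'{e}trique $g_{T^{\prime }}$ quotient de $g_{T}$ par l'action isom\'{e}trique des directions suppl\'{e}mentaires, via la liaison $\alpha :T\rightarrow T^{\prime }$. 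Les relations $f_{i}^{\prime }=\theta _{i}\circ f_{i}$ et $\gamma _{ij}^{\prime }\circ \theta _{j}=\theta _{i}\circ \gamma _{ij}$, jointes au fait que les $\gamma _{ij}$ sont des isom\'{e}tries de $g_{T}$ pr\'{e}servant les fibres de $\alpha$, montrent alors que les $\gamma _{ij}^{\prime }$ sont des isom\'{e}tries locales de $g_{T^{\prime }}$. Ainsi $\mathcal{F}^{\prime }$ est riemannien et $g$ est $\mathcal{F}^{\prime }$-quasifibr\'{e}e.

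L'obstacle principal sera pr\'{e}cis\'{e}ment la descente de la m\'{e}trique transverse au point ii): il faut v\'{e}rifier que $g_{T}$ passe au quotient par la liaison $\alpha$, c'est-\`{a}-dire que les directions transverses ajout\'{e}es en passant de $\mathcal{F}$ \`{a} $\mathcal{F}^{\prime }$ sont des directions d'isom\'{e}trie infinit\'{e}simale, de sorte que la m\'{e}trique horizontale ne d\'{e}pende pas de la fibre de $\alpha$. C'est l\`{a} qu'interviennent de fa\c{c}on essentielle la minimalit\'{e} de $\mathcal{F}$ et le caract\`{e}re quasi-fibr\'{e} de $g$, \`{a} travers l'op\'{e}ration de $\mathcal{G}$ par champs de Killing transverses; le reste (le point i) et la descente formelle des feuilletages) repose sur la seule combinaison des th\'{e}or\`{e}mes 2.1 et 2.6.
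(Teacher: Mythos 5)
Your proposal is correct and takes essentially the same route as the paper: both arguments lift $\mathcal{F}$ and its extension to the leaf closure $\overline{F^{\natural }}$ in the bundle of transverse orthonormal frames, apply the correspondence of Theorem 2.6 to the minimal Lie foliation $\mathcal{F}_{\overline{F^{\natural }}}^{\natural }\subset \pi ^{\ast }\mathcal{F}\subset \pi ^{\ast }\mathcal{F}^{\prime }$ to get part i), and derive part ii) from the fact that the structural algebra acts by $\mathcal{F}$-transverse Killing fields. The step you flag as the main obstacle is precisely what the paper carries out: using Molino's central transverse sheaf and its subsheaf $\mathcal{C}_{\mathcal{G}^{\prime }}(M,\mathcal{F})$ of germs of transverse Killing fields, it shows that the fibers of the liaison $\theta $ are orbits of such germs, hence $\theta $ is a Riemannian submersion (so $\mathcal{F}^{\prime }$ is Riemannian), and it then confirms that $g$ is $\mathcal{F}^{\prime }$-quasifibered by the direct computation $X_{\mathcal{F}_{U}^{\prime }}\,g\left( Y,Z\right) =0$ based on the splitting $X_{\mathcal{F}_{U}^{\prime }}=X_{\mathcal{F}_{U}}+fX_{\mathcal{G}^{\prime }}$, which is the same Killing-field mechanism you invoke for the descent of the metric through the liaison.
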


\begin{proof}
i) Soit $\overline{F^{\natural }}$ l'adh\'{e}rence d'une feuille $%
F^{\natural }$ du feuilletage relev\'{e} $\mathcal{F}^{\natural }$ de $%
\mathcal{F}$ dans le fibr\'{e} des rep\`{e}res orthonorm\'{e}s transverses $%
M^{\natural }$ de $M$ et $\mathcal{F}_{_{\overline{F^{\natural }}%
}}^{\natural }$ la restriction $\mathcal{F}^{\natural }$ \`{a} $\overline{%
F^{\natural }}.$

Comme $\mathcal{F}$ est \`{a} feuille denses alors $\pi :\overline{%
F^{\natural }}\rightarrow M$ est une fibration principal.

Consid\'{e}rons une extension $\mathcal{F}^{\prime }$ de $\mathcal{F}$ et
les feuilletages $\pi ^{\ast }\mathcal{F}$ et $\pi ^{\ast }\mathcal{F}%
^{\prime }$ images r\'{e}ciproques de $\mathcal{F}$ et de $\mathcal{F}%
^{\prime }$.

On v\'{e}rifie aisement que $\mathcal{F}_{_{\overline{F^{\natural }}%
}}^{\natural }\subset \pi ^{\ast }\mathcal{F}\subset $ $\pi ^{\ast }\mathcal{%
F}^{\prime }.$ Il en r\'{e}sulte (\textit{cf. th\'{e}o. 2.6}) qu'il existe
deux sous-alg\`{e}bres de Lie $\mathcal{G}_{_{\mathcal{F}}}$ et $\mathcal{G}%
^{\prime }$ de $\mathcal{G}$ v\'{e}rifiant $\mathcal{G}_{_{\mathcal{F}%
}}\subset \mathcal{G}^{\prime }$ \ et correspondant respectivement \`{a} $%
\pi ^{\ast }\mathcal{F}$ et $\pi ^{\ast }\mathcal{F}^{\prime }.$

On obtient ainsi la sous-alg\`{e}bre de Lie $\mathcal{G}^{\prime }$ de $%
\mathcal{G}$ correspondant $\mathcal{F}^{\prime }.$

ii) Soit $U^{\natural }$ un ouvert distingu\'{e} \`{a} la fois pour $%
\mathcal{F}_{_{\overline{F^{\natural }}}}^{\natural }$ et $\pi ^{\ast }%
\mathcal{F}^{\prime }$ de sorte que $\pi \left( U^{\natural }\right) $ soit
un ouvert \`{a} la fois distingu\'{e} pour $\mathcal{F}$ et $\mathcal{F}%
^{\prime }.$

Consid\'{e}rons $f^{\natural },$ $f^{\prime \natural }$, $f$ et $f^{\prime }$
les projections respectives associ\'{e}es respectivement aux feuilletages
simples $\left( U^{\natural },\mathcal{F}_{_{\overline{F^{\natural }}%
}}^{\natural }\right) ,$ $\left( U^{\natural },\pi ^{\ast }\mathcal{F}%
^{\prime }\right) ,$ $\left( \pi \left( U^{\natural }\right) ,\mathcal{F}%
\right) $ et $\left( \pi \left( U^{\natural }\right) ,\mathcal{F}^{\prime
}\right) $.

Il existe une submersion $\widetilde{\pi }:f^{\natural }\left( U^{\natural
}\right) \rightarrow f\left( \pi \left( U^{\natural }\right) \right) $
rendant le diagramme 
\begin{equation*}
\begin{array}{ccc}
U^{\natural } & \overset{\pi }{\rightarrow } & \pi \left( U^{\natural
}\right) \\ 
\downarrow f^{\natural } &  & \downarrow f \\ 
f^{\natural }\left( U^{\natural }\right) & \overset{\widetilde{\pi }}{%
\rightarrow } & f\left( \pi \left( U^{\natural }\right) \right)%
\end{array}%
\end{equation*}%
commutatif.

Soit $\theta ^{\natural }$ la liaison entre $\left( U^{\natural },\mathcal{F}%
_{_{\overline{F^{\natural }}}}^{\natural }\right) $ et $\left( U^{\natural
},\pi ^{\ast }\mathcal{F}^{\prime }\right) $ et $\theta $ la liaison entre $%
\left( \pi \left( U^{\natural }\right) ,\mathcal{F}\right) $ et $\left( \pi
\left( U^{\natural }\right) ,\mathcal{F}^{\prime }\right) .$

On a le diagramme suivant qui est commutatif :%
\begin{equation*}
\begin{array}{ccccccc}
&  & U^{\natural } & \overset{\pi }{\rightarrow } & \pi \left( U^{\natural
}\right) &  &  \\ 
& ^{f_{{}}^{\prime \natural }}\swarrow ^{{}} & \downarrow f^{\natural } &  & 
\downarrow f & \searrow ^{f^{\prime }} &  \\ 
\theta ^{\natural }\left( f^{\natural }\left( U^{\natural }\right) \right) & 
\overset{\theta ^{\natural }}{\longleftarrow } & f^{\natural }\left(
U^{\natural }\right) & \overset{\widetilde{\pi }}{\rightarrow } & f\left(
\pi \left( U^{\natural }\right) \right) & \overset{\theta }{\rightarrow } & 
f^{\prime }\left( \pi \left( U^{\natural }\right) \right)%
\end{array}%
\text{ \ \ }\left( \ast \right) \text{\ }.
\end{equation*}%
Par constuction de $\pi ^{\ast }\mathcal{F}^{\prime },$ $\pi $ envoie une
feuille de $\pi ^{\ast }\mathcal{F}^{\prime }$ \ sur une feuille de $%
\mathcal{F}^{\prime }.$ De ce fait $\pi $ envoie les fibres de $f^{\prime
\natural }$ sur les fibres de $f^{\prime }.$

Soit donc $x^{\natural }\in \theta ^{\natural }\left( f^{\natural }\left(
U^{\natural }\right) \right) ,$ il existe $x\in f^{\prime }\left( \pi \left(
U^{\natural }\right) \right) $ tel que 
\begin{equation*}
\pi \left( \left( f^{\prime \natural }\right) ^{-1}\left( x^{\natural
}\right) \right) =\left( f^{\prime }\right) ^{-1}\left( x\right) .
\end{equation*}

Or d'apr\`{e}s le diagramme $\left( \ast \right) $ 
\begin{equation*}
\left( \theta ^{\natural }\right) ^{-1}\left( x^{\natural }\right) =\
f^{\natural }\left( \left( f^{\prime \natural }\right) ^{-1}\left(
x^{\natural }\right) \right) \text{ \ et \ }\left( \theta \right)
^{-1}\left( x\right) =\ f\left( \left( f^{\prime }\right) ^{-1}\left(
x\right) \right)
\end{equation*}%
donc%
\begin{eqnarray*}
\widetilde{\pi }\left( \left( \theta ^{\natural }\right) ^{-1}\left(
x^{\natural }\right) \right) &=&\widetilde{\pi }\circ \ f^{\natural }\left(
\left( f^{\prime \natural }\right) ^{-1}\left( x^{\natural }\right) \right)
\\
&=&f\circ \pi \left( \left( f^{\prime \natural }\right) ^{-1}\left(
x^{\natural }\right) \right) \\
&=&f\circ \left( \left( f^{\prime }\right) ^{-1}\left( x\right) \right) \\
&=&\left( \theta \right) ^{-1}\left( x\right)
\end{eqnarray*}%
Il en r\'{e}sulte que $\widetilde{\pi }$ projette les fibres de $\theta
^{\natural }$ sur les fibres de $\theta .$

Ceci \'{e}tant, pour montrer que $\left( \pi \left( U^{\natural }\right) ,%
\mathcal{F}^{\prime }\right) $ est un feuilletage riemannien on montre dans $%
\cite{DAD}$ qu'il suffit de montrer que la liaison $\theta $ est une
submersion riemannienne.

Ceci dit, consid\'{e}rons le faisceau transverse central $\mathcal{C(M}$,$%
\mathcal{F}_{_{\overline{F^{\natural }}}}^{\natural }\mathcal{)}$ du
feuilletage de Lie minimal $\mathcal{F}_{_{\overline{F^{\natural }}%
}}^{\natural }.$

On sait que $\mathcal{C(M}$,$\mathcal{F}_{_{\overline{F^{\natural }}%
}}^{\natural }\mathcal{)}$ est localement trivial et s'identifie aux germes d%
\'{e}finies par l'alg\`{e}bre de Lie structurale $\ell (\mathcal{M},\mathcal{%
F}_{_{\overline{F^{\natural }}}}^{\natural }\mathcal{)}$de $\mathcal{F}_{_{%
\overline{F^{\natural }}}}^{\natural }$.

Comme $\mathcal{C(M}$,$\mathcal{F}_{_{\overline{F^{\natural }}}}^{\natural }%
\mathcal{)}$ est localement trivial de fibre type $\mathcal{G}$ et comme $%
\mathcal{G}^{\prime }$\ est une sous-alg\`{e}bre de Lie de $\mathcal{G}$
alors on peut consid\'{e}rer le \textquotedblright
sous-faisceau\textquotedblright\ $\mathcal{C}_{_{\mathcal{G}^{\prime }}}%
\mathcal{(M}$,$\mathcal{F}_{_{\overline{F^{\natural }}}}^{\natural }\mathcal{%
)}$\ de $\mathcal{C(M}$,$\mathcal{F}_{_{\overline{F^{\natural }}}}^{\natural
}\mathcal{)}$ de fibre type la sous-alg\`{e}bre de Lie $\mathcal{G}^{\prime
} $ de $\mathcal{G}$ correspondant \`{a} $\pi ^{\ast }\mathcal{F}^{\prime }.$

On sait que $\mathcal{C}_{_{\mathcal{G}^{\prime }}}\mathcal{(M}$,$\mathcal{F}%
_{_{\overline{F^{\natural }}}}^{\natural }\mathcal{)}$ d\'{e}finit , par ses
orbites dans $\overline{F^{\natural }}$, le feuilletage $\mathcal{F}^{\prime
}$. Ainsi, sous la supposition que l'ouvert $U^{\natural }$ distingu\'{e} 
\`{a} la fois pour $\mathcal{F}_{_{\overline{F^{\natural }}}}^{\natural }$
et $\pi ^{\ast }\mathcal{F}^{\prime }$ est aussi un ouvert de trivialisation
local du faisceau $\mathcal{C(M}$,$\mathcal{F}_{_{\overline{F^{\natural }}%
}}^{\natural }\mathcal{)}$, on peut affirmer que les fibres de la liaison $%
\theta ^{\natural }$ sont les orbites d'un faisceau de germes de champs de
Killing $\mathcal{F}_{_{\overline{F^{\natural }}}}^{\natural }-$transverses.

On montre dans $\cite{MOL}$ que le sous-faisceau $\mathcal{C}_{_{\mathcal{G}%
^{\prime }}}\mathcal{(M}$,$\mathcal{F}_{_{\overline{F^{\natural }}%
}}^{\natural }\mathcal{)}$ de germes de champs de Killing $\mathcal{F}_{_{%
\overline{F^{\natural }}}}^{\natural }-$transverses se proj\`{e}tent par la
fibration $\pi :\overline{F^{\natural }}\rightarrow M$ en un sous-faisceau $%
\mathcal{C}_{_{\mathcal{G}^{\prime }}}\mathcal{(M}$,$\mathcal{F)}$ de germes
de champs de killing $\mathcal{F-}$transverses. Il r\'{e}sullte de ce qui pr%
\'{e}c\`{e}de et du diagramme commutatif $\left( \mathcal{\ast }\right) $
que les fibres de\ la liaison $\theta $\ entre $\left( \pi \left(
U^{\natural }\right) ,\mathcal{F}\right) $ et $\left( \pi \left( U^{\natural
}\right) ,\mathcal{F}^{\prime }\right) $ sont les orbites d'un faisceau de
germes de champs de Killing $\mathcal{F}-$transverses. Ce qui signifie que
le feuilletage $\mathcal{F}_{\theta }$ d\'{e}fini par $\theta $ est un
feuilletage riemannien. En d'autres termes $\theta $ est une submersion
riemannienne.

Il r\'{e}sulte de ce qui pr\'{e}c\`{e}de que $\mathcal{F}^{\prime }$ est un
feuilletage riemannien.

On v\'{e}rifie aisement, sous la supposition que l'ouvert $U^{\natural }$
distingu\'{e} \`{a} la fois pour $\mathcal{F}_{_{\overline{F^{\natural }}%
}}^{\natural }$ et $\pi ^{\ast }\mathcal{F}^{\prime }$ est aussi un ouvert
de trivialisation local du faisceau $\mathcal{C(M}$,$\mathcal{F}_{_{%
\overline{F^{\natural }}}}^{\natural }\mathcal{)}$, que l'ouvert $U=\pi
\left( U^{\natural }\right) $ \`{a} la fois distingu\'{e} pour $\mathcal{F}$
et $\mathcal{F}^{\prime }$ est aussi un ouvert de trivialisation local du
sous-faisceau $\mathcal{C}_{_{\mathcal{G}^{\prime }}}\mathcal{(M}$,$\mathcal{%
F)}$.

Ceci dit, on suppose que l'ouvert $U=\pi \left( U^{\natural }\right) $ est
un ouvert de trivialisation local du sous-faisceau $\mathcal{C}_{_{\mathcal{G%
}^{\prime }}}\mathcal{(M}$,$\mathcal{F)}$.

Soit $\mathcal{C}_{_{\mathcal{G}^{\prime }}}^{U}\mathcal{(M}$,$\mathcal{F)}$
la restriction \`{a} $U$ du sous-faisceau $\mathcal{C}_{_{\mathcal{G}%
^{\prime }}}\mathcal{(M}$,$\mathcal{F)}$, $\mathcal{F}_{_{U}}$ et $\mathcal{F%
}_{_{U}}^{\prime }$ les restrictions respectives de $\mathcal{F}$ et $%
\mathcal{F}^{\prime }$ \`{a} $U,$ $\mathcal{X}\left( \mathcal{F}%
_{_{U}}\right) $ et $\mathcal{X}\left( \mathcal{F}_{_{U}}^{\prime }\right) $%
\ les $\mathcal{A}^{0}\left( U\right) -$modules des champs de vecteurs
tangents respectivement \`{a} $\mathcal{F}_{_{U}}$ et $\mathcal{F}%
_{_{U}}^{\prime }$ , $Y$ et$\ Z$ deux champs de vecteurs $\mathcal{F}%
_{_{U}}^{\prime }$-feuillet\'{e}s sur $U$ normaux \`{a} $\mathcal{F}%
_{_{U}}^{\prime }$ et $X_{_{_{\mathcal{F}_{_{U}}^{\prime }}}}\in \mathcal{X}%
\left( \mathcal{F}_{_{U}}^{\prime }\right) .$

On a $\mathcal{C}_{_{\mathcal{G}^{\prime }}}^{U}\mathcal{(M}$,$\mathcal{F)}$
qui est trivial de fibre type $\mathcal{\pi }_{\ast }\left( \mathcal{G}%
^{\prime }\right) $ et il d\'{e}finit $\mathcal{F}_{_{U}}^{\prime }$
transversalement \`{a} $\mathcal{F}_{_{U}}.$ En fait $\mathcal{F}%
_{_{U}}^{\prime }$ est d\'{e}fini par les orbites $\cite{AM}$ de l'alg\`{e}%
bre de Lie $\mathcal{\pi }_{\ast }\left( \mathcal{G}^{\prime }\right) $ de
champs killings $\mathcal{F}-$transverses locals. Par cons\'{e}quent 
\begin{equation*}
\mathcal{X}\left( \mathcal{F}_{_{U}}^{\prime }\right) =\mathcal{X}\left( 
\mathcal{F}_{_{U}}\right) \mathcal{\oplus }\left( \mathcal{\mathcal{A}}%
^{0}\left( U\right) \mathcal{\otimes \pi }_{\ast }\left( \mathcal{G}^{\prime
}\right) \right) .
\end{equation*}

Comme 
\begin{equation*}
\mathcal{X}\left( \mathcal{F}_{_{U}}^{\prime }\right) =\mathcal{X}\left( 
\mathcal{F}_{_{U}}\right) \mathcal{\oplus }\left( \mathcal{\mathcal{A}}%
^{0}\left( U\right) \mathcal{\otimes \pi }_{\ast }\left( \mathcal{G}^{\prime
}\right) \right) ,
\end{equation*}
il existe $X_{_{\mathcal{F}_{_{U}}}}\in \mathcal{X}\left( \mathcal{F}%
_{_{U}}\right) $, $X_{_{\mathcal{G}^{\prime }}}\in \mathcal{\pi }_{\ast
}\left( \mathcal{G}^{\prime }\right) $ et $f\in \mathcal{\mathcal{A}}%
^{0}\left( U\right) $ tels que 
\begin{equation*}
X_{_{_{\mathcal{F}_{_{U}}^{\prime }}}}=X_{_{\mathcal{F}_{_{U}}}}+fX_{_{%
\mathcal{G}^{\prime }}}.
\end{equation*}%
Ainsi 
\begin{equation*}
X_{_{_{\mathcal{F}_{_{U}}^{\prime }}}}g\left( Y,Z\right) =X_{_{\mathcal{F}%
_{_{U}}}}g\left( Y,Z\right) +fX_{_{\mathcal{G}^{\prime }}}g\left( Y,Z\right)
.
\end{equation*}

Comme $\mathcal{F}_{_{U}}\subset \mathcal{F}_{_{U}}^{\prime }$ et les champs
de vecteurs $Y$ et$\ Z$ normaux \`{a} $\mathcal{F}_{_{U}}^{\prime }$ alors $%
Y $ et$\ Z$ sont aussi normaux \`{a} $\mathcal{F}_{_{U}}.$ Or la m\'{e}%
trique $g$ est $\mathcal{F}$-quasifibr\'{e}e donc 
\begin{equation*}
X_{_{\mathcal{F}_{_{U}}}}g\left( Y,Z\right) =g\left( \left[ X_{_{\mathcal{F}%
_{_{U}}}},Y\right] ,Z\right) +g\left( Y,\left[ X_{_{\mathcal{F}_{_{U}}}},Z%
\right] \right) .
\end{equation*}%
Les champs de vecteurs $Y$ et$\ Z$ \'{e}tant $\mathcal{F}_{_{U}}^{\prime }$%
-feuillet\'{e}s on a $\left[ X_{_{\mathcal{F}_{_{U}}}},Y\right] \in \mathcal{%
X}\left( \mathcal{F}_{_{U}}^{\prime }\right) $ et $\left[ X_{_{\mathcal{F}%
_{_{U}}}},Z\right] \in \mathcal{X}\left( \mathcal{F}_{_{U}}^{\prime }\right)
.$ Ceci entraine, $Y$ et$\ Z$ \'{e}tant normaux \`{a} $\mathcal{F}%
_{_{U}}^{\prime },$ que 
\begin{equation*}
X_{_{\mathcal{F}_{_{U}}}}g\left( Y,Z\right) =g\left( \left[ X_{_{\mathcal{F}%
_{_{U}}}},Y\right] ,Z\right) +g\left( Y,\left[ X_{_{\mathcal{F}_{_{U}}}},Z%
\right] \right) =0.
\end{equation*}

D'autre part $X_{_{\mathcal{G}^{\prime }}}$\ est un champ de Killing
transverse local pour $\mathcal{F}$. D'o\`{u} 
\begin{equation*}
X_{_{\mathcal{G}^{\prime }}}g\left( Y,Z\right) =g\left( \left[ X_{_{\mathcal{%
G}^{\prime }}},Y\right] ,Z\right) +g\left( Y,\left[ X_{_{\mathcal{G}^{\prime
}}},Z\right] \right) .
\end{equation*}%
Pour les m\^{e}mes raisons que pr\'{e}c\'{e}demment on obtient 
\begin{equation*}
X_{_{\mathcal{G}^{\prime }}}g\left( Y,Z\right) =g\left( \left[ X_{_{\mathcal{%
G}^{\prime }}},Y\right] ,Z\right) +g\left( Y,\left[ X_{_{\mathcal{G}^{\prime
}}},Z\right] \right) =0.
\end{equation*}

Il r\'{e}sulte de ce qui pr\'{e}c\`{e}de que $X_{_{_{\mathcal{F}%
_{_{U}}^{\prime }}}}g\left( Y,Z\right) =0.$ Et, ceci signifie que la m\'{e}%
trique $g$ est $\mathcal{F}^{\prime }-$quasi-fibr\'{e}e.
\end{proof}

Les r\'{e}sultats de la proposition 3.3 et des th\'{e}or\`{e}mes 3.7 et 4.1
permettent d'\'{e}tablir aisement ce qui suit:

\begin{corollary}
Soit $\mathcal{F}$ est un feuilletage riemannien minimal sur une vari\'{e}t%
\'{e} compacte connexe.

Alors $\mathcal{F}$ admet un drapeau complet d'extension si et seulement si $%
\mathcal{F}$ est transversalement diagonal.
\end{corollary}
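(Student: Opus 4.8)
The plan is to obtain the corollary by assembling three results already established in the paper---Proposition~3.3, Theorem~4.1 and Theorem~3.7---so that essentially no new computation is needed; the two implications of the equivalence are handled separately.

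For the ``if'' direction I would argue directly. Suppose $\mathcal{F}$ is transversely diagonal. Then Proposition~3.3, applied with $\mathcal{F}_q=\mathcal{F}$, already produces a complete flag of extensions $\mathcal{D}_{\mathcal{F}}=(\mathcal{F}_{q-1},\dots,\mathcal{F}_1)$. I note that this implication uses neither minimality nor the Riemannian hypothesis, so nothing further is required here.

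For the ``only if'' direction, assume $\mathcal{F}=\mathcal{F}_q$ is a minimal Riemannian foliation of codimension $q$ on the compact connected manifold $M$, and that it admits a complete flag $\mathcal{D}_{\mathcal{F}}=(\mathcal{F}_{q-1},\dots,\mathcal{F}_1)$. First I would fix a bundlelike metric $g$ for $\mathcal{F}$, with transverse metric $g_T$; such a $g$ exists precisely because $\mathcal{F}$ is Riemannian. The key step is to recognise that each $\mathcal{F}_k$ with $k<q$ is an extension of $\mathcal{F}$ in the sense of Definition~2.2: by transitivity of the inclusions $\mathcal{F}_q\subset\mathcal{F}_{q-1}\subset\dots\subset\mathcal{F}_k$, every leaf of $\mathcal{F}_k$ is a union of leaves of $\mathcal{F}_q$, and the codimension of $\mathcal{F}_k$ equals $k<q$. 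Hence Theorem~4.1(ii) applies to each $\mathcal{F}_k$: every member of the flag is a Riemannian foliation and the \emph{same} metric $g$ is $\mathcal{F}_k$-quasifibrée. Thus $g$ is a single bundlelike metric common to $\mathcal{F}_q$ and to all foliations of the flag.

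With this common metric in hand, $\mathcal{D}_{\mathcal{F}}$ becomes a complete Riemannian flag of extensions satisfying exactly the hypotheses of Theorem~3.7. Applying that theorem yields that each $\mathcal{F}_k$ is transversely diagonal, and in particular $\mathcal{F}=\mathcal{F}_q$ is transversely diagonal, which is the conclusion sought. The only point that demands care---and the ``main obstacle'', modest as it is---is checking that Theorem~4.1, phrased for a single extension of the minimal foliation $\mathcal{F}$, really furnishes one metric that is simultaneously bundlelike for every rung of the flag. This reduces to the transitivity observation above, applied to each $\mathcal{F}_k$ in turn while keeping $g$ fixed throughout the chain, so that the hypotheses of Theorem~3.7 are genuinely met and the matrix characterisation it provides can be invoked.
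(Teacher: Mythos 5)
Your proof is correct and follows precisely the route the paper takes: the paper obtains this corollary by combining Proposition~3.3 (the ``if'' direction) with Theorems~4.1 and~3.7 (the ``only if'' direction), which is exactly your assembly. Your explicit check that, by transitivity, each $\mathcal{F}_{k}$ of the flag is an extension of $\mathcal{F}$ in the sense of D\'{e}finition~2.2 --- so that Theorem~4.1(ii) makes the single metric $g$ quasi-fibr\'{e}e for every rung, as required by Theorem~3.7 --- merely fills in the detail the paper leaves as ``ais\'{e}ment''.
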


\end{document}